\documentclass[12pt,reqno]{amsart}

\usepackage{amsmath}
\usepackage{amssymb}
\usepackage{a4wide}
\usepackage{soul}
\usepackage{graphicx}
\usepackage{color} 
\usepackage{graphics}
\usepackage{epsfig}
\usepackage{bbm}

\parskip = 0.05 in



\newtheorem{theorem}{Theorem}[section]
\newtheorem{lemma}[theorem]{Lemma}
\newtheorem{proposition}[theorem]{Proposition}

\newtheorem{remark}[theorem]{Remark}

\newcounter{hypo}

\newenvironment{hyp}{
 \begin{enumerate}
\setcounter{enumi}{\value{hypo}} \item}{\stepcounter{hypo} \end{enumerate}}




\makeatletter
 \@addtoreset{equation}{section}
 \makeatother
 

\title[Rate of decay of the energy]
{Rate of decay of some Petrowsky-like dissipative systems}

\author[K. Ammari]{Ka\"{\i}s Ammari}
\address{Ka\"{\i}s Ammari, UR Analyse et Contr\^ole des Edp, UR 13ES64, D\'ep. de Math\'ematiques, 
Facult\'e des Sciences de Monastir, Universit\'e de Monastir, 
5019 Monastir, Tunisie}
\email{kais.ammari@fsm.rnu.tn}
\author[M. Dimassi]{Mouez Dimassi}
\address{Mouez Dimassi, Universit\'e Bordeaux 1, CNRS, UMR 5251 IMB, 351, cours de la Libération
33405 Talence cedex, France}
\email{dimassi@math.u-bordeaux1.fr}
\author[M. Zerzeri]{Maher Zerzeri}
\address{Maher Zerzeri, Universit\'e Paris XIII, CNRS, UMR 7539 LAGA, 99, ave Jean-Baptiste Cl\'ement
F-93430 Villetaneuse, France}
\email{zerzeri@math.univ-paris13.fr}

\subjclass[2010]{74K10 (35Q72 35B40 34L20)}
\keywords{Rate of Decay,  Petrowsky-like systems, Spectral Abscissa, Riesz Basis.}

\begin{document}

\begin{abstract}
In this paper, we show that the fastest decay rate for some Petrowsky-like dissipative systems is given by the supremum of
the real part of the spectrum of the infinitesimal generator of the underlying semigroup,
if the corresponding operator satisfied some spectral gap condition. We give also some applications to illustrate
our setting.
\end{abstract}

\maketitle

\tableofcontents

\vfill\break

\section{Introduction and Main Result}\label{intro}

The determination of optimal decay rate is
difficult and has not a complete answer in the general case.
In the 1-d case, it was performed mostly,
see \cite{AmHeTu00_01,AmHeTu01_01,BeRa00_01,CaCo01_01,CoZu94_01,CoZu95_01,Fr99_01}, and to references therein.
For higher dimension, G. Lebeau gives in \cite{Le96_01} the explicit
(and optimal) value of the best decay rate in terms of the spectral abscissa of the generator of the
semigroup and the mean value of a damping cofficient along the rays of geometrical optics. 

In this paper, we describe, in abstract setting, the optimal decay rate 
for some Petrowsky-like dissipative systems in terms of spectral quantity
of the corresponding infinitesimal generator of the underlying dynamic.
The main idea is to identify the optimal energy decay rate  with the supremum of the real part of
the associated dissipative operator. 
To do this, it is enough to show that the set of the corresponding generalized eigenvectors
forms a Riesz basis of the energy space. The approach used here is based on some resolvent estimates which 
is obtained by a perturbative method.

In case of the damped wave operator, Cox and Zuazua (\cite{CoZu94_01}) 
adopt the shooting method based  on an ansatz of Horn. 
This  approach consists in  constructing  an explicit  approximation 
of the  characteristic  equation of the underlying system. 
Under the assumption that the damping is of bounded variation, 
they obtained high frequency asymptotic expansions of the spectrum.  
The shooting method can be used only for one-dimensional boundary value problems. 

In the both cases, we require precise knowledge of the spectrum of the corresponding non self-adjoint operators, more precisely,
the behavior of the high frequency set.
The advantage of our approach is that it  works in any dimension  
and in a very general setting (see \cite{Ra81_01} and also \cite{Ka66_01}).

Let us introduce the abstract setting. Let $H$ be a Hilbert space equipped with the norm $\Vert\cdot\Vert_H$. 
Let $A$ be an unbounded operator on $H$, self-adjoint, positive 
and with compact inverse. We denote its domain by $\mathcal{D}(A)$.

Let $B$ be a bounded operator from $U$ to $H$, 
where $\big(U,\Vert \cdot\Vert_{U}\big)$ is another Hilbert space which will be identified with its dual.

We consider the following system:
\begin{equation}\label{Eqd1} 
\left\{
\begin{array}{lc}
\ddot{x}(t) + A x(t) + B B^* \dot{x}(t)= 0,\\
\big(x(0),\dot{x}(0)\big)=(x_0,x_1)\in H_{\frac{1}{2}}\times H,
\end{array}
\right.
\end{equation}
where $t\in[0,\infty)$ is the time and $H_{\frac{1}{2}}=\mathcal{D}(A^{\frac{1}{2}})$ the scaled Hilbert space
with the norm $\Vert z \Vert_{\frac{1}{2}}=\Vert A^{\frac{1}{2}} z\Vert_H$, $\forall z\in H_{\frac{1}{2}}$.
 
From now on, we set ${\mathcal H}:=H_{\frac{1}{2}} \times H$. We endowe this space with the inner product:
$$
\Big\langle \left[f,g\right],\left[u,v\right]\Big\rangle_{\mathcal{H}} := 
\langle A^{\frac{1}{2}}f,A^{\frac{1}{2}}u \rangle_H+ \langle g, v\rangle_H, \quad\text{for all} \,\, [f,g], [u,v]\
\text{in}\ {\mathcal{H}}.
$$ 

We can rewrite the system (\ref{Eqd1}) as a first order differential equation, by putting 
$Y(t)={}^T\big(x(t),\dot{x}(t)\big)$:
\begin{equation}
\label{Eqd3} 
\left\{
\begin{array}{ll}
\dot{Y}(t) + {\mathcal A}_{B} Y(t)=0,\\ 
Y(0)={}^T(x_0,x_1)\in {\mathcal H},
\end{array}
\right.
\end{equation}
where ${\mathcal A}_{B}:={\mathcal A}_0-{\mathcal B}: 
{\mathcal D}({\mathcal A}_{{B}})={\mathcal D}({\mathcal A}_0) \subset {\mathcal H} \rightarrow {\mathcal H},$ with
$$
{\mathcal A}_0= \left(
\begin{array}{cc}
\,\, 0  & I \\
- A \,\,  & 0
\end{array}
\right) : {\mathcal D}({\mathcal A}_0) = {\mathcal D}(A)\times H_{\frac{1}{2}} \subset {\mathcal H}  \rightarrow {\mathcal H},$$ and
$
\displaystyle
{\mathcal B} = \left(
\begin{array}{cc}
0  & \,\, 0 \\
0  & B B^* 
\end{array}
\right)\in {\mathcal L}(\mathcal{H}).$

The operator ${\mathcal A}_0$ is skew-adjoint on ${\mathcal H}$ hence 
it generates a strongly continuous group of unitary operators on ${\mathcal H}$, 
denoted by $\big({\bf S}_0(t)\big)_{t \in \mathbb R}$. Since ${\mathcal A}_{B}$ is dissipative and onto, 
it generates a contraction semi-group on ${\mathcal H}$, denoted by $\big({\bf S}_{B}(t)\big)_{t \in \mathbb R^+}$. 
The system (\ref{Eqd1}) is well-posed. More precisely, the following classical result holds.
\begin{proposition}\label{exist}
Suppose that $(x_0,x_1) \in {\mathcal H}$. Then the
problem (\ref{Eqd1}) admits a unique solution $t\mapsto x(t)$ in the space $C\big([0,+\infty);H_{\frac{1}{2}} \big)\cap C^1\big([0,+\infty);H\big).$
Moreover the solution $t\mapsto x(t)$ satisfies the following energy identity:
\begin{equation}\label{ESTEN}
E\big(x(0)\big)- E\big(x(t)\big)= \int_0^t
\big\Vert B^*\dot{x}(s)\big\Vert_{U}^2\, ds,\quad \hbox{\rm for all} \,\, t\geq 0,
\end{equation}
where $E\big(x(t)\big)= \displaystyle \; \frac{1}{2}\Big\Vert\big(x(t),\dot{x}(t)\big)\Big\Vert^2_{\mathcal H}.$
\end{proposition}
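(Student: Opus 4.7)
The approach is to reformulate (\ref{Eqd1}) as the first-order Cauchy problem (\ref{Eqd3}) on $\mathcal{H}$ and apply the Lumer--Phillips theorem to $\mathcal{A}_B$. Once the contraction semigroup $\mathbf{S}_B(t)$ generated by $\mathcal{A}_B$ is in hand, existence, uniqueness and the regularity $x \in C([0,+\infty);H_{1/2})\cap C^1([0,+\infty);H)$ are read off from the orbit $Y(t) = \mathbf{S}_B(t)\,{}^T(x_0,x_1) \in C([0,+\infty);\mathcal{H})$, and the energy identity will be obtained by a direct differentiation argument for classical solutions, then extended by density to all of $\mathcal{H}$.

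To apply Lumer--Phillips, I would first check the dissipativity of $\mathcal{A}_B$: for $Y=(u,v)\in\mathcal{D}(\mathcal{A}_B)=\mathcal{D}(A)\times H_{1/2}$, using $\langle Au, v\rangle_H = \langle A^{1/2} u, A^{1/2} v\rangle_H$ (valid since $u\in\mathcal{D}(A)$ and $v\in H_{1/2}$) and the skew-adjointness of $\mathcal{A}_0$, the cross terms cancel and one finds $\operatorname{Re}\langle \mathcal{A}_B Y, Y\rangle_{\mathcal{H}} = -\|B^* v\|_U^2 \leq 0$. Second, I would verify the range condition that $\lambda I-\mathcal{A}_B$ is surjective for some $\lambda>0$. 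Eliminating $v = \lambda u - f$ from $(\lambda I-\mathcal{A}_B)(u,v)=(f,g)$ reduces the problem to the elliptic equation $(A + \lambda^2 I + \lambda BB^*) u = g + \lambda f + BB^* f$ in $H$. The associated sesquilinear form $a(u,\phi) = \langle A^{1/2} u, A^{1/2}\phi\rangle_H + \lambda^2\langle u,\phi\rangle_H + \lambda\langle B^* u, B^*\phi\rangle_U$ is bounded and coercive on $H_{1/2}$---the damping contribution $\lambda\|B^* u\|_U^2$ is nonnegative and so only strengthens coercivity---hence Lax--Milgram yields a unique $u \in H_{1/2}$; testing against arbitrary $\phi\in H_{1/2}$ then identifies $Au \in H$, giving $u\in\mathcal{D}(A)$ and $(u,v)\in\mathcal{D}(\mathcal{A}_B)$.

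For the energy identity, I would first restrict to classical solutions, i.e.\ $(x_0,x_1)\in\mathcal{D}(\mathcal{A}_B)$, for which $Y\in C^1([0,+\infty);\mathcal{H})\cap C([0,+\infty);\mathcal{D}(\mathcal{A}_B))$. Then $\tfrac{d}{dt} E(x(t)) = \operatorname{Re}\langle \dot Y(t), Y(t)\rangle_{\mathcal{H}} = -\|B^* \dot x(t)\|_U^2$ by the dissipativity computation above, and integrating over $[0,t]$ gives (\ref{ESTEN}). The general case $(x_0,x_1)\in\mathcal{H}$ follows by density of $\mathcal{D}(\mathcal{A}_B)$ in $\mathcal{H}$, together with strong continuity of $\mathbf{S}_B(t)$ and the boundedness of $B^*$, which ensures that both sides of (\ref{ESTEN}) pass to the limit.

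The whole argument is standard for dissipative evolution equations of wave type; the only step where some care is needed is the Lax--Milgram verification for the range condition, and even there the positive perturbation $\lambda BB^*$ poses no obstruction. No genuine obstacle is anticipated.
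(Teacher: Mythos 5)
Your proof is correct and follows exactly the route the paper itself invokes: the paper states this proposition without proof, justifying it only by the remark that $\mathcal{A}_B$ is dissipative and onto (i.e.\ Lumer--Phillips), and your dissipativity computation, Lax--Milgram verification of the range condition, and the differentiate-then-densify derivation of \eqref{ESTEN} are the standard fleshing-out of that one-line argument.
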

From (\ref{ESTEN}) it follows that the mapping $t\longmapsto \Big\Vert\big(x(t),\dot{x}(t)\big)\Big\Vert^2_{\mathcal H}$ is 
non-increasing. In many applications it is important to know if this
mapping decays exponentially when $t\to+\infty$, i.e., if the
system (\ref{Eqd1}) is exponentially stable. One
of the methods currently used for proving such exponential
stability results is based on an observability inequality for the
conservative system associated to the initial value problem
\begin{equation}\label{Eqd4} 
\ddot\phi(t) + A \phi(t) = 0,\quad 
\big(\phi(0),\dot{\phi}(0)\big)=(x_0,x_1)\in \mathcal H. 
\end{equation}

It is well-known that (\ref{Eqd4}) is well-posed in ${\mathcal H}$. 
The result below, proved in \cite{Ha89_01} (see also \cite{AmTu01_01}), 
shows that the exponential stability of
(\ref{Eqd1}) is equivalent to an observability
inequality for (\ref{Eqd4}). 
\begin{proposition} \label{princx}
The system described by (\ref{Eqd1}) is
exponentially stable in ${\mathcal H}$ if and only if
there exist $T>0,$ and $C_T > 0$ such that
\begin{equation}\label{Esti} 
C_T \int_{0}^{T} \Big\Vert\left(
\begin{array}{lc}
0  & B^* 
\end{array}
\right){\bf S}_0(t) Y_0\Big\Vert^2_{U}\, dt 
\geq  
\big\Vert Y_0\big\Vert^2_{\mathcal H},\quad \hbox{\rm for all}\,\,
Y_0 \in {\mathcal H}.
\end{equation}
\end{proposition}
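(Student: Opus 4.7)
The proposition is the classical equivalence between exponential stability of the dissipative semigroup and observability of the conservative one (a variant of the Haraux–Russell principle). The strategy is to compare, via Duhamel's formula, the damped trajectory $Y(t) := {\bf S}_{B}(t) Y_0 = {}^T(x(t),\dot x(t))$ with the conservative one $\Phi(t) := {\bf S}_0(t) Y_0 = {}^T(\phi(t),\dot\phi(t))$ starting from the same datum $Y_0 \in \mathcal{H}$. Since $W := \Phi - Y$ solves $\dot W + \mathcal{A}_0 W = \mathcal{B} Y$ with $W(0) = 0$, Duhamel yields
\[
\Phi(t) - Y(t) = \int_0^t {\bf S}_0(t-s)\, \mathcal{B}\, Y(s)\, ds.
\]
Because ${\bf S}_0$ is unitary on $\mathcal{H}$, the $H$-norm of the second component of ${\bf S}_0(t-s)\mathcal{B}Y(s)$ is bounded by $\|\mathcal{B}Y(s)\|_{\mathcal H}=\|BB^*\dot x(s)\|_H\leq \|B\|\,\|B^*\dot x(s)\|_U$. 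Applying the bounded operator $B^*$ and using Cauchy–Schwarz in time, one obtains a comparison estimate of the form
\[
\int_0^T \bigl\|B^*\bigl(\dot\phi(t) - \dot x(t)\bigr)\bigr\|_U^2\, dt \leq K(T) \int_0^T \|B^*\dot x(s)\|_U^2\, ds,
\]
with $K(T)=\|B\|^4 T^2$; by symmetry of the argument the reverse inequality, with $x$ and $\phi$ exchanged, also holds.

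For the direction (\ref{Esti}) $\Rightarrow$ exponential stability: the comparison above combined with the hypothesis and the triangle inequality gives
\[
\|Y_0\|_{\mathcal H}^2 \leq 2C_T\bigl(1+K(T)\bigr)\int_0^T \|B^*\dot x(s)\|_U^2\, ds = 2C'_T\bigl(E(Y_0) - E(Y(T))\bigr),
\]
by the energy identity (\ref{ESTEN}). Since $\|Y_0\|_{\mathcal H}^2 = 2E(Y_0)$, this yields $E(Y(T)) \leq (1-\delta)E(Y_0)$ with $\delta = 1/C'_T \in (0,1)$. Iterating via the semigroup property $Y(nT) = {\bf S}_B(T)^n Y_0$ produces the exponential decay of $\|{\bf S}_B(t)Y_0\|_{\mathcal H}$.

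Conversely, assume $\|{\bf S}_B(t)\|_{\mathcal{L}(\mathcal{H})} \leq M e^{-\omega t}$. Choosing $T$ so large that $M^2 e^{-2\omega T} < 1$, the energy identity (\ref{ESTEN}) implies
\[
\int_0^T \|B^*\dot x(s)\|_U^2\, ds = E(Y_0) - E(Y(T)) \geq \tfrac{1}{2}\bigl(1 - M^2 e^{-2\omega T}\bigr)\|Y_0\|_{\mathcal H}^2.
\]
Applying the reverse Duhamel comparison to control $\int_0^T\|B^*\dot x\|_U^2$ by a constant times $\int_0^T\|B^*\dot \phi\|_U^2$ converts the above lower bound into the observability inequality (\ref{Esti}) for the conservative dynamics.

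The main technical point is the Duhamel comparison step: it relies crucially on the boundedness of $B:U\to H$, which guarantees that $\mathcal{B}$ is bounded on $\mathcal H$ and that $B^*$ may be applied inside the time integral without loss. Once this comparison is in place the remainder is bookkeeping with the energy identity and the semigroup property, following the arguments in \cite{Ha89_01,AmTu01_01}.
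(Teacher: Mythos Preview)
Your proof is correct and follows exactly the Duhamel-comparison argument of Haraux \cite{Ha89_01} (see also \cite{AmTu01_01}) that the paper invokes; note that the paper itself does not supply a proof of this proposition but cites these references, so there is nothing further to compare. One minor point: your ``symmetry'' for the reverse comparison is not literal symmetry---it uses the contractivity of $\mathbf{S}_B$ in place of the unitarity of $\mathbf{S}_0$---but the estimate goes through unchanged.
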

  
The spectrum of $A$ is given by 
$0<\mu_1\leq \mu_2\leq \mu_3\leq \cdots\leq \mu_n\leq \cdots\rightarrow +\infty$ 
and the family 
$(v_n)_{n\geq 1}$ of corresponding normalized  eigenvectors of $A$ is an orthonormal basis of $H$.  
Now, we can describe the spectrum of the skew-adjoint
operator ${\mathcal A}_{0}$ by the following:
\begin{lemma}\label{undampedspectra}
The eigenvalues of ${\mathcal A}_{0}$ and the corresponding eigenvectors are given by:
\begin{equation}\label{unspectra}
{\mathcal A}_{0}V_{\pm k}=\big(\pm i\sqrt{\mu_k}\big)V_{\pm k},\,\,\,{where}\,\, 
V_{\pm k}=\frac{v_k}{\sqrt{2}} \Big[
\frac{1}{\sqrt{\mu_k}}, \pm i\Big],\quad \hbox{for all}\quad k\in \mathbb N^*.
\end{equation}
Moreover, the family $\big(V_{\pm k}\big)_{k\in \mathbb N^*}$ is an orthonormal basis of the energy
space ${\mathcal H}$.
\end{lemma}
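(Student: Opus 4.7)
The plan is to prove the lemma by direct computation, reducing the spectral problem for $\mathcal{A}_0$ to the known spectral decomposition of $A$, and then verifying orthonormality and completeness explicitly in the energy inner product.

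First, I would solve the eigenvalue equation $\mathcal{A}_0 [f,g]^T = \lambda [f,g]^T$ on $\mathcal{D}(A)\times H_{1/2}$. The first component yields $g = \lambda f$, and the second gives $-Af = \lambda g = \lambda^2 f$, so $f$ must be an eigenvector of $A$ with eigenvalue $-\lambda^2$. Since $A$ is positive self-adjoint with spectrum $\{\mu_k\}$, the only possibilities are $-\lambda^2 = \mu_k$, i.e., $\lambda = \pm i\sqrt{\mu_k}$, with corresponding $f$ a scalar multiple of $v_k$. Normalizing so that $f = \frac{v_k}{\sqrt{2\mu_k}}$ (and hence $g = \pm i \frac{v_k}{\sqrt{2}}$) produces exactly the claimed vectors $V_{\pm k}$.

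Next, I would verify orthonormality directly using the inner product on $\mathcal{H}$. The unit norm follows from $\|V_{\pm k}\|_\mathcal{H}^2 = \frac{1}{2\mu_k}\|A^{1/2}v_k\|_H^2 + \frac{1}{2}\|v_k\|_H^2 = \frac{1}{2}+\frac{1}{2}=1$. For $k\neq\ell$, both components vanish because $\langle v_k,v_\ell\rangle_H = 0$ (and hence also $\langle A^{1/2}v_k, A^{1/2}v_\ell\rangle_H = \mu_k\langle v_k,v_\ell\rangle_H=0$). The only remaining case is $V_k$ versus $V_{-k}$, where the contributions of the first and second slots are $+\frac{1}{2}$ and $-\frac{1}{2}$ (the sign flip coming from the sesquilinear $i\cdot\overline{(-i)} = -1$), which cancel.

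The main point worth addressing carefully is completeness, so that $(V_{\pm k})$ is actually a basis and not merely an orthonormal system. Given arbitrary $[u,v]\in\mathcal{H}$, expand $u=\sum_k a_k v_k$ and $v=\sum_k b_k v_k$ in the basis $(v_k)$; membership in $\mathcal{H}$ is equivalent to $\sum_k(\mu_k|a_k|^2+|b_k|^2)<\infty$. Then I would look for coefficients $\alpha_k,\beta_k$ such that $[u,v]=\sum_k(\alpha_k V_k+\beta_k V_{-k})$. Matching components yields the $2\times 2$ system $\alpha_k+\beta_k = a_k\sqrt{2\mu_k}$ and $\alpha_k-\beta_k=-ib_k\sqrt{2}$, which is uniquely solvable and gives $|\alpha_k|^2+|\beta_k|^2 = \mu_k|a_k|^2+|b_k|^2$, a summable sequence. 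Hence the partial sums converge in $\mathcal{H}$ to $[u,v]$, proving totality.

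I do not expect a genuine obstacle here: every step is a finite-dimensional linear-algebra computation or a direct application of the spectral theorem for $A$. The only place where care is required is to track the sesquilinearity in the orthogonality check for $V_k$ against $V_{-k}$, and to verify that the expansion in the completeness step converges in the $\mathcal{H}$-norm rather than merely componentwise.
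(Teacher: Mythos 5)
Your proof is correct. The paper states this lemma without proof, treating it as a classical fact about the first-order reduction of $\ddot x + Ax=0$, and your argument --- solving the eigenvalue equation via the spectral decomposition of $A$, checking orthonormality in the $\mathcal H$-inner product (including the sign cancellation between $V_k$ and $V_{-k}$), and proving completeness through the explicit, norm-preserving $2\times 2$ change of coefficients --- is exactly the standard argument one would supply. The only minor caveat is that when $\mu_k$ is a repeated eigenvalue the eigenvector $f$ need only lie in the corresponding eigenspace of $A$ rather than be a multiple of a single $v_k$; since the $(v_n)$ are enumerated with multiplicity this does not affect the orthonormal-basis conclusion, and your completeness step is independent of it.
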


\subsection{Main result}\label{main}

As mentionned in the above, we give the value of the fastest decay rate of 
solution of the equation \eqref{Eqd1},
in terms of the spectral abscissa of the generator ${\mathcal A}_B$. 

Let $\mu(B)$ be the {\it spectral abscissa} of ${\mathcal A}_B$ given by:
\begin{equation}\label{abscissespec}
\mu(\mathcal{A}_B)= \sup \big\{{\rm Re}(\lambda);\ \lambda \in \sigma({\mathcal A}_B) \big\}.
\end{equation}
Here $\sigma({\mathcal A}_B)$
denotes the spectrum of ${\mathcal A}_B$. 
In order to state the result on the optimal decay rate, 
we define the decay rate, depending on $B$, as 
$$
\omega({B})=\inf\big\{\omega;\ \hbox{there exists}\,\, C=C(\omega)>0\, \hbox{such that}$$
\begin{equation}\label{DefRate}
E(x(t))\leq C(\omega) \, e^{2\omega t}E(x(0))
\hbox{ for every solution of (\ref{Eqd1}) with initial data in}\ {\mathcal H}\big\}.
\end{equation}
According to (\ref{ESTEN}), $\omega(B)\leq 0$  (see
\cite{Ha89_01} and also \cite{AmTu01_01}). It follows easily that, 
\begin{equation}\label{IN1} 
\mu(\mathcal{A}_B)\leq \omega(B).
\end{equation}

The following assumption concern the high frequencies of ${\mathcal A}_0$.
More precisely, it deals with the behavior of the gap between two consecutive high frequencies
of ${\mathcal A}_0$. 
For $k\in \mathbb N^*$, we define $\delta_{\pm k}:=\vert \pm i(\sqrt{\mu_{k+1}}-\sqrt{\mu_k})\vert=\sqrt{\mu_{k+1}}-\sqrt{\mu_k}.$ 
We assume that 

\begin{hyp}\label{A1}
$\displaystyle\lim_{k\rightarrow +\infty}\delta_k=+\infty$, 
\end{hyp}

\noindent
and
\begin{hyp}\label{A2}

\,\,\, $\displaystyle \left(\frac{\delta_{k+1}}{\delta_{k}^2}\right)_{k\geq 1}\in l^2(\mathbb N^*)$,
where $ l^2(\mathbb N^*)$ is the space of square integrable sequences.
\end{hyp}

\noindent
{\bf Remarks}

\begin{itemize}
\item[(i)] The assumption \ref{A1} implies that the high frequencies of ${\mathcal{A}_0}$ are simple.

\medskip
\item[(ii)] Assumption \ref{A2} implies
\begin{equation}\label{A3}
\lim_{k\rightarrow +\infty}\left(\frac{\delta_{k+1}}{\delta_{k}^2}\right)=0.
\end{equation}

\medskip
\item[(iii)] Note that, in general, assumption \ref{A2} does not imply hypothesis \ref{A1}.

\end{itemize}

Now, our main result on the optimal decay rate is:
\begin{theorem}\label{Princ}
Assume \ref{A1} and \ref{A2}. Then,
\begin{equation}\label{princ}
\omega(B)=\mu(\mathcal{A}_B).
\end{equation}
\end{theorem}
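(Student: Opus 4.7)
\medskip

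\noindent\textbf{Proof plan.} The inequality $\mu(\mathcal{A}_B)\le\omega(B)$ is already noted in \eqref{IN1}, so the real task is the reverse bound $\omega(B)\le\mu(\mathcal{A}_B)$. My strategy is the one announced in the introduction: I will prove that the root vectors of $\mathcal{A}_B$ form a Riesz basis of $\mathcal{H}$, and then read off the decay rate from the spectral expansion.

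\medskip

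\noindent\emph{Step 1 (Spectral localization via perturbation).} Since $\mathcal{A}_B=\mathcal{A}_0-\mathcal{B}$ with $\mathcal{B}\in\mathcal{L}(\mathcal{H})$ and $\mathcal{A}_0$ skew-adjoint with simple eigenvalues $\pm i\sqrt{\mu_k}$ (by \ref{A1}, for $k$ large), the resolvent $(\lambda-\mathcal{A}_0)^{-1}$ is diagonal in the orthonormal basis $(V_{\pm k})$ of Lemma \ref{undampedspectra} and satisfies $\|(\lambda-\mathcal{A}_0)^{-1}\|\le 1/\operatorname{dist}(\lambda,i\sqrt{\mu_k}\mathbb{Z})$. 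Pick $k_0$ so large that for $k\ge k_0$ the disks $D_{\pm k}$ of radius $r_k:=\tfrac12\min(\delta_k,\delta_{k-1})$ around $\pm i\sqrt{\mu_k}$ are pairwise disjoint and $\|\mathcal{B}\|/r_k<\tfrac12$; this uses \ref{A1}. A Neumann-series argument then shows that $(\lambda-\mathcal{A}_B)^{-1}=(\lambda-\mathcal{A}_0)^{-1}(I-\mathcal{B}(\lambda-\mathcal{A}_0)^{-1})^{-1}$ is well defined on $\partial D_{\pm k}$, and that $\mathcal{A}_B$ has exactly one simple eigenvalue $\lambda_{\pm k}$ in $D_{\pm k}$, with Riesz projector
\begin{equation*}
P_{\pm k}=\frac{1}{2\pi i}\int_{\partial D_{\pm k}}(\lambda-\mathcal{A}_B)^{-1}\,d\lambda.
\end{equation*}

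\medskip

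\noindent\emph{Step 2 (Quadratic closeness of eigenvectors).} Define the normalized eigenvector $W_{\pm k}$ of $\mathcal{A}_B$ as the image of $V_{\pm k}$ under $P_{\pm k}$ (renormalized). Expanding the Neumann series one step gives
\begin{equation*}
W_{\pm k}-V_{\pm k}
=\frac{1}{2\pi i}\int_{\partial D_{\pm k}}(\lambda-\mathcal{A}_0)^{-1}\mathcal{B}(\lambda-\mathcal{A}_B)^{-1}V_{\pm k}\,d\lambda
+\text{(lower order)}.
\end{equation*}
Expanding the inner resolvent in the basis $(V_{\pm j})$ and computing the residues yields, schematically,
\begin{equation*}
\|W_{\pm k}-V_{\pm k}\|_{\mathcal{H}}^{\,2}
\le C\sum_{j\ne k}\frac{|\langle \mathcal{B}V_{\pm k},V_{\pm j}\rangle|^{2}}{|\sqrt{\mu_k}-\sqrt{\mu_j}|^{2}}\,.
\end{equation*}
Using $\|\mathcal{B}\|\le M$ and the pigeonhole-type bound $\sum_{j\ne k}|\sqrt{\mu_k}-\sqrt{\mu_j}|^{-2}\lesssim 1/\delta_k^{2}$ (the minimum gap from $\pm i\sqrt{\mu_k}$ to the rest of the spectrum is $\delta_{k}\wedge\delta_{k-1}$, and the other gaps grow), one arrives at $\|W_{\pm k}-V_{\pm k}\|_{\mathcal{H}}\le C\,\delta_{k+1}/\delta_k^{2}$ after a careful bookkeeping that exploits \ref{A1}. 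Summing the squares and invoking \ref{A2} yields
\begin{equation*}
\sum_{k\ge k_0}\|W_{\pm k}-V_{\pm k}\|_{\mathcal{H}}^{\,2}<+\infty.
\end{equation*}

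\medskip

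\noindent\emph{Step 3 (Riesz basis and decay rate).} Since $(V_{\pm k})$ is an orthonormal basis of $\mathcal{H}$, the Bari--Markus theorem ensures that $(W_{\pm k})_{|k|\ge k_0}$, completed by a finite family of (generalized) eigenvectors associated with the remaining part of the spectrum of $\mathcal{A}_B$ (this finite part lies in a compact set by compactness of the resolvent), forms a Riesz basis $(\widetilde W_n)$ of $\mathcal{H}$. Every $Y_0\in\mathcal{H}$ then decomposes as $Y_0=\sum c_n \widetilde W_n$ with $\sum|c_n|^2\asymp\|Y_0\|_{\mathcal{H}}^{2}$, and
\begin{equation*}
\mathbf{S}_B(t)Y_0=\sum c_n e^{\lambda_n t}\widetilde W_n
\quad\Longrightarrow\quad
\|\mathbf{S}_B(t)Y_0\|_{\mathcal{H}}^{\,2}\le C\sum|c_n|^{2}e^{2\operatorname{Re}\lambda_n\,t}.
\end{equation*}
For the finite Jordan part an extra polynomial factor $(1+t)^{N}$ is absorbed into any $\varepsilon>0$, giving $\|\mathbf{S}_B(t)\|\le C_\varepsilon e^{(\mu(\mathcal{A}_B)+\varepsilon)t}$. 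Hence $\omega(B)\le\mu(\mathcal{A}_B)+\varepsilon$ for every $\varepsilon>0$, which combined with \eqref{IN1} gives \eqref{princ}.

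\medskip

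\noindent\emph{Main obstacle.} The delicate point is clearly Step 2: the naive resolvent estimate only yields $\|W_{\pm k}-V_{\pm k}\|=O(1/\delta_k)$, which is not square-summable in general. To reach the sharper rate $\delta_{k+1}/\delta_k^{2}$ that feeds assumption \ref{A2}, one has to integrate over a contour of radius $\asymp \delta_{k+1}$ (not $\delta_k$) and exploit the fact that on that contour $(\lambda-\mathcal{A}_0)^{-1}$ is small in all directions except the one pointing at $\pm i\sqrt{\mu_{k+1}}$. Getting this balance right is where the proof really lives; once the $\ell^{2}$-closeness is established, Bari--Markus and the Riesz-basis expansion are standard.
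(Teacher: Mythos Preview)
Your global architecture is exactly the paper's: localize the spectrum of $\mathcal{A}_B$ by bounded perturbation of $\mathcal{A}_0$, show the perturbed Riesz projectors are $\ell^2$-close to the unperturbed ones, invoke a Bari--type theorem (the paper cites P\"oschel--Trubowitz) to get a Riesz basis of generalized eigenvectors, and then read off $\omega(B)\le\mu(\mathcal{A}_B)$ from the eigen-expansion of $\mathbf{S}_B(t)$. Steps~1 and~3 are fine and essentially identical to the paper's Sections~\ref{description}--\ref{PR}.

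The gap is in Step~2. Your residue expansion is correct to first order, but the subsequent estimates are not: the ``pigeonhole-type bound'' $\sum_{j\ne k}|\sqrt{\mu_k}-\sqrt{\mu_j}|^{-2}\lesssim 1/\delta_k^{2}$ is not true under \ref{A1} alone, and in any case combining it with $|\langle\mathcal{B}V_k,V_j\rangle|\le\|\mathcal{B}\|$ only yields $\|W_{\pm k}-V_{\pm k}\|=O(1/\delta_k)$, not $O(\delta_{k+1}/\delta_k^{2})$. Your ``Main obstacle'' paragraph then tries to recover the sharper rate by an unspecified change of contour radius, but the mechanism you describe (``small in all directions except the one pointing at $\pm i\sqrt{\mu_{k+1}}$'') does not correspond to any actual computation.

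The paper's route is much more elementary and avoids residues entirely. It encloses $i\sqrt{\mu_n}$ by a \emph{rectangle} $\Gamma^{(n)}$ whose perimeter is $\asymp\delta_n$ while its distance to $\sigma(\mathcal{A}_0)$ is $\asymp\delta_{n-1}$. On $\Gamma^{(n)}$ the skew-adjointness of $\mathcal{A}_0$ gives $\|(\lambda-\mathcal{A}_0)^{-1}\|\le 2/\delta_{n-1}$, and a one-term Neumann expansion then gives
\[
\big\|(\lambda-\mathcal{A}_B)^{-1}-(\lambda-\mathcal{A}_0)^{-1}\big\|
\;\le\;C\,\|\mathcal{B}\|\,\|(\lambda-\mathcal{A}_0)^{-1}\|^{2}
\;\le\;\frac{C}{\delta_{n-1}^{2}}\qquad(\lambda\in\Gamma^{(n)}).
\]
Integrating over $\Gamma^{(n)}$ (length $\asymp\delta_n$) immediately yields
$\|P^{B}_{\Gamma^{(n)}}-P^{0}_{\Gamma^{(n)}}\|\le C\,\delta_n/\delta_{n-1}^{2}$,
which is \emph{exactly} the sequence \ref{A2} declares square-summable. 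No eigenbasis expansion of the resolvent, and no ``careful bookkeeping'', is needed. The separation of the two scales (perimeter $\delta_n$ versus distance $\delta_{n-1}$) is the whole point, and it comes from the shape of the contour, not from an anisotropic resolvent estimate.

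As a side remark: your belief that the naive $O(1/\delta_k)$ bound is not square-summable under \ref{A1}--\ref{A2} is actually mistaken. Writing $1/\delta_k^{2}=(\delta_{k+1}/\delta_k^{2})\cdot(1/\delta_{k+1})$ and applying Cauchy--Schwarz to partial sums shows $\sum_k 1/\delta_k^{2}\le \sum_k(\delta_{k+1}/\delta_k^{2})^{2}<\infty$, so even your disk contours would have sufficed. But the paper's rectangle argument is cleaner because it matches \ref{A2} on the nose without this extra step.
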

In other words if all finite energy solutions of (\ref{Eqd1}) are 
exponentially stable then the fastest decay rate 
of the solution of  (\ref{Eqd1}) satisfies (\ref{princ}). 

\noindent
{\bf Outline of the proof:}
In the following, we give an idea of the proof of the main result.

First, for $B=0$, the operator $\mathcal{A}_0$ is skew-adjoint with compact resolvent in 
$\mathcal{H}$. From general operator theory, all
its eigenvalues lie on the imaginary axis and the geometric and algebraic multiplicity
of each eigenvalue are the same. Moreover, there is a sequence
of eigenvectors of $\mathcal{A}_0$  which forms a Riesz (orthonormal, actually) basis
for $\mathcal{H}$.

In our setting, i.e., $B\in {\mathcal L}(U,H)$, we give in Proposition \ref{propspectrales} rough preliminary bounds on the spectrum of $\mathcal{A}_B$.   
Moreover, since $\mathcal{A}_B$ is a bounded perturbation of skew-adjoint operator with compact resolvent it follows from  
\cite[Chapter 5, Theorem 10.1]{GoKr69_01} that the generalized eigenvectors  of $\mathcal{A}_B$ are complete in $\mathcal{H}$.  
For instance, these results are not enough to prove Theorem \ref{Princ}. We need to study the high frequency of $\mathcal{A}_B$, and in particular their algebraic multiplicities.
Using the fact that the distance between two consecutive eigenvalues tends to infinity at infinity, as well as the fact that the dissipation is bounded, we construct in Subsection 
\ref{description} a closed curves $(\Gamma^{(k)})_{\vert k\vert>N_0}$  (for some integer $N_0$ sufficiently large) in the complex plane such that:

\begin{itemize}

\item[(i)] For all $n\in \mathbb N^*$,  $\Gamma^{(\pm n)}$ is centered in ($\pm i\sqrt{\mu_n}$). 
\par\medskip
\item[(ii)] Inside each $\Gamma^{(n)}$ there exits exactly one simple eigenvalue of $\mathcal{A}_B$.
\par\medskip
\item[(iii)] The operator $\mathcal{A}_B$ has exactly $2N_0$ eigenvalues including multiplicity in ${\mathbb C}\setminus(\underset{\vert k\vert>N_0}\cup\Gamma^{(k)})$.

\item[(iv)] $\displaystyle \sum_{\vert k\vert>N_0} \Vert P_{\Gamma^{(k)}}^B-P_{\Gamma^{(k)}}^0\Vert^2_{{\mathcal L}(\mathcal{H})}<\infty$, 
where  $P_{\Gamma^{(k)}}^B$ (resp. $P_{\Gamma^{(k)}}^0$) denotes the Riesz projection associated to $\mathcal{A}_B$ (resp. $\mathcal{A}_0$) 
corresponding to $\Gamma^{(k)}$.

\end{itemize}

The proof of the above  statements  are based on some resolvent estimates of the operators $\mathcal{A}_B$ and $\mathcal{A}_0$. Since the 
generalized eigenvectors  of $\mathcal{A}_B$  are complete and the systems of (generalized)-eigenvectors of $\mathcal{A}_B$ and $\mathcal{A}_0$ are quadratically close in $\mathcal{H}$ (see (iv) above),
it follows  from \cite[Appendix D, Theorem 3]{PoTr86_01} that  the system of generalized eigenvectors  of $\mathcal{A}_B$ constitutes a Riesz  basis in $\mathcal{H}$. 
Now, by a standard argument, we identify the optimal energy decay rate  with the supremum of the real part of $\mathcal{A}_B$, which complete the proof of Theorem \ref{Princ}.

\section{Proof of the main result}\label{proofmain}

As indicated in the introduction,  we will establish Theorem \ref{Princ} by proving that the system of generalized eigenvectors 
of the operator ${\mathcal A}_B$ constitutes a Riesz basis in the energy space ${\mathcal{H}}$,  and that all eigenvalues of ${\mathcal A}_B$
with sufficiently large modulus are algebraically simple. 

\subsection{Description of the spectrum of ${\mathcal{A}_B}$}\label{description}

The operator ${\mathcal A}_{B}$ is a bounded perturbation of a skew-adjoint
operator ${\mathcal A}_{0}$ then, according to \cite[Chapter 5, Theorem
10.1]{GoKr69_01}, we have the following spectral result:
\begin{proposition}\label{propspectrales}
The following properties hold:
\begin{enumerate}
\item
The resolvent of ${\mathcal A}_{B}$ is compact. In particular the spectrum of ${\mathcal A}_{B}$ is discrete, i.e., 
${\mathcal A}_{B}$ has a discrete eigenvalues of finite algebraic multiplicity.
\item
The spectrum of ${\mathcal A}_{B}$ is symmetric about the real axis and is contained in ${\mathcal C}\cup {\mathcal I}$, 
where
\begin{equation}\label{compzonespectre}
{\mathcal C}=\Big\{\lambda\in \mathbb C; \ |\lambda|\geq \sqrt{\mu_1}\,, \,-\beta\leq {\rm Re}(\lambda)\leq 0\Big\}
\end{equation}
\begin{equation}\label{realzonespectre}
{\mathcal I}=\Big[-\beta-\big(\beta^2-\mu_1\big)_+^{\frac12}, \big(\beta^2-\mu_1\big)_+^{\frac12}\Big].
\end{equation}
Here $\beta:=\frac{1}{2}\Vert B^*\Vert^2_{{\mathcal L}(H,U)}<+\infty$, $\mu_1>0,$ 
is the first eigenvalue of $A$ and $(\gamma)_+=\max(\gamma,0)$. 
\item
The root vectors of ${\mathcal A}_{B}$ are complete in ${\mathcal H}$.
\end{enumerate}
\end{proposition}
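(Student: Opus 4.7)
My plan is to dispatch parts (1) and (3) with standard perturbation machinery, and to work out (2) by hand via the usual quadratic-form trick.

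For part (1), the compactness of $A^{-1}$ forces the embedding $\mathcal{D}(\mathcal{A}_0)=\mathcal{D}(A)\times H_{\frac{1}{2}}\hookrightarrow \mathcal{H}$ to be compact; since $\mathcal{B}\in\mathcal{L}(\mathcal{H})$ does not move the domain, the resolvent of $\mathcal{A}_B$ factors through this compact embedding and is therefore itself compact. Discreteness of $\sigma(\mathcal{A}_B)$ and finite algebraic multiplicity of every eigenvalue are then immediate. For part (3), I would simply cite \cite[Chapter 5, Theorem 10.1]{GoKr69_01}: $\mathcal{A}_B$ is a bounded perturbation of a skew-adjoint operator with compact resolvent, which is exactly the hypothesis there.

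The substance is part (2). Fix $\lambda\in\sigma(\mathcal{A}_B)$ and an associated eigenvector $Y={}^T(y_0,y_1)$. Writing out $\mathcal{A}_B Y=\lambda Y$ gives $y_1=\lambda y_0$ and $\lambda^2 y_0+\lambda BB^*y_0+Ay_0=0$ in $H$, whence $y_0\neq 0$. Pairing with $y_0$ in $H$ yields the \emph{real} scalar quadratic
\begin{equation*}
\|y_0\|_H^2\,\lambda^2+\|B^*y_0\|_U^2\,\lambda+\|A^{\frac{1}{2}}y_0\|_H^2=0.
\end{equation*}
Combined with the Rayleigh bound $\|A^{\frac{1}{2}}y_0\|_H^2\geq\mu_1\|y_0\|_H^2$ and the operator-norm bound $\|B^*y_0\|_U^2\leq 2\beta\|y_0\|_H^2$, a simple discriminant dichotomy finishes: in the negative-discriminant case, $\mathrm{Re}(\lambda)=-\|B^*y_0\|_U^2/(2\|y_0\|_H^2)\in[-\beta,0]$ and $|\lambda|^2=\|A^{\frac{1}{2}}y_0\|_H^2/\|y_0\|_H^2\geq\mu_1$, placing $\lambda\in\mathcal{C}$; otherwise $\lambda$ is real, both roots of the quadratic are nonpositive, and $|\lambda|\leq\beta+\sqrt{(\beta^2-\mu_1)_+}$, placing $\lambda\in\mathcal{I}$.

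Symmetry of $\sigma(\mathcal{A}_B)$ about the real axis can be read off the realness of the coefficients of that same quadratic; to promote this to the operator level, I would invoke the complex conjugation on $\mathcal{H}$ coming from the natural real structure (under which $A$ and $BB^*$ are real), which intertwines with $\mathcal{A}_B$ and therefore sends generalized $\lambda$-eigenvectors to generalized $\bar\lambda$-eigenvectors. I do not anticipate a genuine obstacle anywhere in this program; the one step calling for slight care is verifying that the real-root branch of the discriminant analysis only activates when $\beta^2\geq\mu_1$ (so the square root appearing in $\mathcal{I}$ is nondegenerate precisely when real eigenvalues can occur) and tracking the constants in that branch tightly enough to land inside the stated interval.
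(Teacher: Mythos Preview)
Your proposal is correct and follows essentially the same route as the paper: parts (1) and (3) are dispatched by the same standard perturbation facts (the paper likewise defers them, citing \cite[Chapter~5, Theorem~10.1]{GoKr69_01}), and for part (2) both you and the paper reduce the eigenvalue equation to the scalar quadratic $\|y_0\|_H^2\lambda^2+\|B^*y_0\|_U^2\lambda+\|A^{1/2}y_0\|_H^2=0$, then split into the non-real/real discriminant cases and read off the bounds from $\|B^*\|$ and $\mu_1$. Your handling of the real branch (noting both roots are nonpositive and bounding $|\lambda|$) is in fact slightly cleaner than the paper's, but the argument is the same.
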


\begin{proof}

We give only the proof of the second point of the proposition.
Let $\lambda_k:=\lambda_k({\mathcal A}_{B})$ be an eigenvalue of ${\mathcal A}_{B}$. 
We denote by $W(\cdot;\lambda_k)$ the corresponding eigenvector. Then 
$W(\cdot;\lambda_k)=u(\cdot,\lambda_k)^{T}(1,\lambda_k),$ where
$u(\cdot;\lambda_k)$ satisfies 
\begin{equation}\label{eigenvector}
\lambda_k^2u(\cdot;\lambda_k)+\lambda_k BB^*u(\cdot;\lambda_k)+Au(\cdot;\lambda_k)=0
\quad {\rm with}\,\, u(\cdot,\lambda_k)\in H_{1}\,. 
\end{equation}
Since ${\mathcal A}_{B}$ is real it follows that
$\overline{W(\cdot;\lambda_k)}=W\Big(\cdot;\overline{\lambda_k}\Big)$ is an
eigenvector of ${\mathcal A}_{B}$ corresponding to the eigenvalue
$\overline{\lambda_k}$. 
We take the scalar product of the equation (\ref{eigenvector}) with $u(\cdot,\lambda_k)$, we obtain:
$$
\lambda_{\pm k}=
-\frac{1}{2}\Big\Vert B^*\left(\frac{u(\cdot;\lambda_k)}{\Vert u(\cdot;\lambda_k)\Vert_H}\right)\Big\Vert^2_{U}
\pm \left(\frac{1}{4}\Big\Vert B^*\left(\frac{u(\cdot;\lambda_k)}{\Vert u(\cdot;\lambda_k)\Vert_H}\right)\Big\Vert^4_{U}-
\Big\Vert A^{\frac12}\left(\frac{u(\cdot;\lambda_k)}{\Vert u(\cdot;\lambda_k)\Vert_H}\right)\Big\Vert^2_{H}
\right)^{\frac12}.
$$
Hence, if $\lambda_k$ is a non-real eigenvalue, we find
$$
\lambda_{\pm k}=
-\frac{1}{2}\Big\Vert B^*\left(\frac{u(\cdot;\lambda_k)}{\Vert u(\cdot;\lambda_k)\Vert_H}\right)\Big\Vert^2_{U}
\pm
i\sqrt{\Big\Vert A^{\frac12}\left(\frac{u(\cdot;\lambda_k)}{\Vert u(\cdot;\lambda_k)\Vert_H}\right)\Big\Vert^2_{H}-
\frac{1}{4}\Big\Vert B^*\left(\frac{u(\cdot;\lambda_k)}{\Vert u(\cdot;\lambda_k)\Vert_H}\right)\Big\Vert^4_{U}
}\,,
$$
which implies that, since $B^*$ is bounded from $H$ to $U$,
$$
0<-\beta \leq {\rm Re}(\lambda_{\pm k})=-\frac{1}{2}\Big\Vert B^*\left(\frac{u(\cdot;\lambda_k)}{\Vert u(\cdot;\lambda_k)\Vert_H}\right)\Big\Vert^2_{U}\leq 0,
$$
where $\beta:=\frac{1}{2}\Vert B^*\Vert^2_{{\mathcal L}(H,U)}<+\infty$, and
$$
\vert \lambda_{\pm k}\vert^2=\Big\Vert A^{\frac12}\left(\frac{u(\cdot;\lambda_k)}{\Vert u(\cdot;\lambda_k)\Vert_H}\right)\Big\Vert^2_{H}\geq \mu_1. 
$$
If $\lambda_k$ is real we observe that
$$
\sqrt{\frac{1}{4}\Big\Vert B^*\left(\frac{u(\cdot;\lambda_k)}{\Vert u(\cdot;\lambda_k)\Vert_H}\right)\Big\Vert^4_{U}-
\Big\Vert A^{\frac12}\left(\frac{u(\cdot;\lambda_k)}{\Vert u(\cdot;\lambda_k)\Vert_H}\right)\Big\Vert^2_{H}}\leq
\big(\beta^2-\mu_1\big)_+^{\frac12}.
$$
Here $\mu_1>0,$ is the first eigenvalue of $A$.

\end{proof}

To state the principal result of this subsection (see Theorem \ref{countinglemma}), we need to introduce some notations.
For $n\in \mathbb N^*$, we define the three complex numbers: 
\begin{eqnarray}\label{fourpoints}
a_n=\sqrt{\mu_{n-1}}+\frac{1}{2}\delta_{n-1},\quad
\ b_n=\frac{1}{2}\delta_n+i\sqrt{\mu_{n}}
\quad\text{and}\quad 
d_n=-\frac{1}{2}\delta_n+i\sqrt{\mu_{n}},
\end{eqnarray}
where $\delta_k:=\sqrt{\mu_{k+1}}-\sqrt{\mu_k}$ for $k\in \mathbb N^*$.
Let ${\rm Int}(\Gamma^{(n)})$ denote the rectangle with sides  
$\gamma_{1}^{(n)},  \gamma_{2}^{(n)},  \gamma_{3}^{(n)}$ and $ \gamma_{4}^{(n)}$, 
(see Figure 1), where
$$
 \gamma_{1}^{(n)}:=\big\{\lambda\in \mathbb C;\ \text{\rm Im}(\lambda)=a_n \quad \text{and} \quad 
|\text{\rm Re}(\lambda)|<\frac{\delta_n}{2}\big\},
$$
$$
\gamma_{2}^{(n)}:=\big\{\lambda\in \mathbb C;\ \text{\rm Re}(\lambda)=\frac{\delta_n}{2} \quad
\text{and}    \quad   a_n\leq  \text{\rm Im}(\lambda) \leq   a_{n+1}\big\},
$$
$$
\gamma_{3}^{(n)}:=\big\{\lambda\in \mathbb C;\ \text{\rm Im}(\lambda)=a_{n+1}\quad \text{and}\quad 
\text{\rm Re}(\lambda) \ \text{goes from} \ \frac{\delta_n}{2} \ \text{to} \ -\frac{\delta_n}{2}\big\},
$$
and
$$
\gamma_{4}^{(n)}:=\big\{\lambda\in \mathbb C;   \text{\rm Re} (\lambda)=-\frac{\delta_n}{2}\quad
\text{and}\quad  \text{\rm Im}(\lambda) \ \text{goes from} \  a_{n+1} \ \text{to} \  
a_n \big\}.
$$
For  $n=1,2,....$, we set 
\begin{equation}\label{rect}
\Gamma^{(n)}=\gamma_{1}^{(n)}\cup   \gamma_{2}^{(n)}\cup  \gamma_{3}^{(n)}\cup \gamma_{4}^{(n)},\quad
\Gamma^{(-n)}:=\{z\in \mathbb{C};\ \overline{z}\in \Gamma^{(n)}\}
\end{equation}
and
$$
C^{(n)}=\big\{z\in \mathbb C;\ |{\rm Im}(z)|<\sqrt{\mu_{n-1}}+\frac{\delta_{n-1}}{2}\
\text{and} \ |{\rm Re}(z)|<\frac{\delta_{n-1}}{2}\big\}.
$$
Note that by construction ${\rm Int}(\Gamma^{(k)})\cap {\rm Int}(\Gamma^{(n)})=\emptyset$ for all $k,n\in \mathbb Z^*$ such that $k\not=n$.
Here we denote the interior of $\Gamma^{(k)}$ by ${\rm Int}(\Gamma^{(k)})$. Moreover, for all $N\in\mathbb N^*$ we have
${\mathcal C}\cup {\mathcal I}\subset C^{(N)}\bigcup(\underset{{|k|\geq N} }{\cup}{\rm Int}(\Gamma^{(k)}))$, where 
${\mathcal C}$ and ${\mathcal I}$ are given by \eqref{compzonespectre} and \eqref{realzonespectre}.
\begin{figure}
\begin{center}
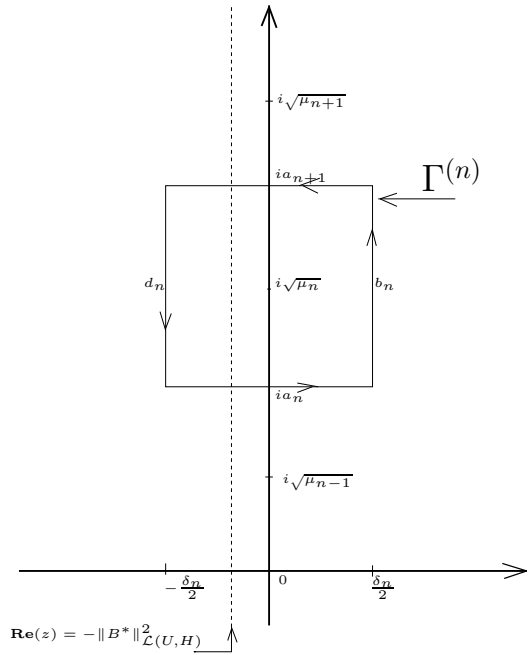
\end{center}
\caption{Location of $\sigma({\mathcal A}_B$)} \label{f1}
\end{figure}

\begin{theorem}\label{countinglemma}
We assume \ref{A1} and that \eqref{A3} is satisfied. Then, there exists $N_0\in \mathbb N^*$ large enough such that
the operator ${\mathcal A}_B$ has exactly $2N_0$ eigenvalues, including multiplicity, in $C_{N_0}$ and 
one simple eigenvalue in ${\rm Int}(\Gamma^{(k)})$ for each $k$ with $|k|>N_0$. This exhausts the spectrum of ${\mathcal A}_B$.
\end{theorem}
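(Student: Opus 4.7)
The plan is to compare the Riesz projection of $\mathcal{A}_B$ on each contour $\Gamma^{(n)}$ to that of $\mathcal{A}_0$, via a resolvent-perturbation argument, and then invoke the classical fact that two projections in a Hilbert space at operator-norm distance strictly less than $1$ have the same rank.

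First I would establish a resolvent estimate for $\mathcal{A}_0$ along $\Gamma^{(n)}$. Since $\mathcal{A}_0$ is skew-adjoint with spectrum $\{\pm i\sqrt{\mu_k}\}_{k\ge 1}$, one has $\|(\mathcal{A}_0-\lambda)^{-1}\|_{\mathcal{L}(\mathcal{H})} = 1/\mathrm{dist}(\lambda,\sigma(\mathcal{A}_0))$. By construction the vertical sides $\gamma_2^{(n)}, \gamma_4^{(n)}$ stay at distance $\delta_n/2$ from the imaginary axis, while the horizontal sides are at distance $\delta_{n-1}/2$ (for $\gamma_1^{(n)}$) or $\delta_n/2$ (for $\gamma_3^{(n)}$) from the nearest pole. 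Under \ref{A1} all of these distances tend to $+\infty$, so for $n$ large $\|\mathcal{B}\|\,\|(\mathcal{A}_0-\lambda)^{-1}\|\le 1/2$ uniformly on $\Gamma^{(n)}$. Writing $\mathcal{A}_B-\lambda = (\mathcal{A}_0-\lambda)\bigl(I-(\mathcal{A}_0-\lambda)^{-1}\mathcal{B}\bigr)$ then yields a convergent Neumann series for $(\mathcal{A}_B-\lambda)^{-1}$, in particular $\|(\mathcal{A}_B-\lambda)^{-1}\|\le 2\|(\mathcal{A}_0-\lambda)^{-1}\|$ on $\Gamma^{(n)}$.

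Next I would use the resolvent identity $(\mathcal{A}_B-\lambda)^{-1}-(\mathcal{A}_0-\lambda)^{-1}=(\mathcal{A}_B-\lambda)^{-1}\mathcal{B}(\mathcal{A}_0-\lambda)^{-1}$ to bound the difference of Riesz projections:
$$\|P^B_{\Gamma^{(n)}}-P^0_{\Gamma^{(n)}}\|_{\mathcal{L}(\mathcal{H})}\le \frac{\|\mathcal{B}\|}{\pi}\oint_{\Gamma^{(n)}}\|(\mathcal{A}_0-\lambda)^{-1}\|^2\,d|\lambda|.$$
A side-by-side inspection, using the lengths of the four sub-curves, shows that the dominant contribution comes from $\gamma_1^{(n)}$ and is of order $\delta_n/\delta_{n-1}^2$; by \eqref{A3} this tends to zero. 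So for $|n|>N_0$ with $N_0$ sufficiently large, $\|P^B_{\Gamma^{(n)}}-P^0_{\Gamma^{(n)}}\|<1$. Since $P^0_{\Gamma^{(n)}}$ is the rank-one projection onto $\mathrm{span}(V_n)$ by Lemma \ref{undampedspectra} (the eigenvalue $i\sqrt{\mu_n}$ being simple for $|n|$ large by \ref{A1}), the classical rank-stability of projections at distance less than $1$ forces $\mathrm{rank}\,P^B_{\Gamma^{(n)}}=1$, yielding exactly one algebraically simple eigenvalue of $\mathcal{A}_B$ in $\mathrm{Int}(\Gamma^{(n)})$.

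For the count inside $C_{N_0}$, Proposition \ref{propspectrales} and the geometry of the contours already give $\sigma(\mathcal{A}_B)\subset C_{N_0}\cup\bigcup_{|k|>N_0}\mathrm{Int}(\Gamma^{(k)})$. Running the same Riesz-projection comparison along $\partial C_{N_0}$ (whose distance to $\sigma(\mathcal{A}_0)$ is at least $\delta_{N_0-1}/2\to+\infty$) yields $\|P^B_{\partial C_{N_0}}-P^0_{\partial C_{N_0}}\|<1$, so $\mathcal{A}_B$ and $\mathcal{A}_0$ have the same number of eigenvalues in $C_{N_0}$ counted with algebraic multiplicity, namely the $2N_0$ points of $\sigma(\mathcal{A}_0)$ enclosed there (Lemma \ref{undampedspectra}). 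The main obstacle is calibrating $\Gamma^{(n)}$: the vertical cuts at $\pm\delta_n/2$ together with horizontal cuts at the midpoints between consecutive frequencies are exactly what makes the worst-side contribution match $\delta_n/\delta_{n-1}^2$, which is precisely the quantity forced to vanish by hypothesis \eqref{A3}; any coarser geometric choice would produce an estimate not controllable by the available hypotheses.
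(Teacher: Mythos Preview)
Your proposal is correct and follows essentially the same route as the paper: a Neumann-series resolvent perturbation to bound $\|(\lambda-\mathcal{A}_B)^{-1}-(\lambda-\mathcal{A}_0)^{-1}\|$ on $\Gamma^{(\pm n)}$ and $\partial C^{(n)}$, then the rank-stability of projections at distance less than $1$, combined with the spectral localization from Proposition~\ref{propspectrales}. The paper packages the resolvent bound and the contour-length estimate into Lemmas~\ref{resolventestimate} and~\ref{lemR}, arriving at the same $\delta_n/\delta_{n-1}^2$ control, but the content is exactly what you wrote.
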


We have divided the proof into a sequence of lemmas.

\begin{lemma}\label{resolventestimate}
Assume \ref{A1}. Then, there exists $C>0$ and $N_0\in {\mathbb N}$ (large enough)  such that for $n>N_0$,   
the following properties hold:

\noindent
(i)  $\Gamma^{(\pm n)}\cup\partial C^{(n)}\subset \mathbb C\setminus\big(\sigma({\mathcal A}_B)\cup\sigma({\mathcal A}_0)\big)$.

\noindent
(ii) 
\begin{equation}\label{R0}
\Vert (\lambda-{\mathcal A}_B)^{-1}-(\lambda-{\mathcal A}_0)^{-1}\Vert_{{\mathcal L}({\mathcal H})}\leq
\frac{C}{\delta_{n-1}^2},\, \text { uniformly on } \lambda\in \Gamma^{(\pm n)}\cup  \partial C^{(n)}\,,
\end{equation}
where $\partial C^{(n)}$ is the boundary of the rectangle $C^{(n)}$.
\end{lemma}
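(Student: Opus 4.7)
The proof splits naturally into a geometric estimate for the resolvent of $\mathcal{A}_0$, followed by a Neumann-series perturbation argument. First, I would establish the key geometric fact: every $\lambda \in \Gamma^{(\pm n)} \cup \partial C^{(n)}$ satisfies $\mathrm{dist}(\lambda,\sigma(\mathcal{A}_0)) \geq c\,\delta_{n-1}$ for some absolute constant $c > 0$. This is a direct consequence of the definitions of the rectangles. For instance, on the horizontal side $\gamma_1^{(n)}$ at height $a_n = \sqrt{\mu_{n-1}} + \delta_{n-1}/2$, the nearest spectral points $i\sqrt{\mu_{n-1}}$ and $i\sqrt{\mu_n}$ each lie at distance exactly $\delta_{n-1}/2$; similarly one checks $\gamma_3^{(n)}, \gamma_2^{(n)}, \gamma_4^{(n)}$ and the four sides of $\partial C^{(n)}$ (where the real part $\pm \delta_{n-1}/2$ already provides the required separation).

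Since $\mathcal{A}_0$ is skew-adjoint with orthonormal eigenbasis $\{V_{\pm k}\}$ by Lemma \ref{undampedspectra}, one has the sharp identity $\|(\lambda-\mathcal{A}_0)^{-1}\|_{\mathcal{L}(\mathcal{H})} = 1/\mathrm{dist}(\lambda,\sigma(\mathcal{A}_0))$, hence
\[
\|(\lambda-\mathcal{A}_0)^{-1}\|_{\mathcal{L}(\mathcal{H})} \leq \frac{C_1}{\delta_{n-1}}, \qquad \lambda \in \Gamma^{(\pm n)} \cup \partial C^{(n)}.
\]
By assumption \ref{A1}, $\delta_{n-1} \to +\infty$, so one may fix $N_0$ large enough that $C_1 \|\mathcal{B}\|_{\mathcal{L}(\mathcal{H})}/\delta_{n-1} \leq 1/2$ for all $n > N_0$. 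Then $\|(\lambda-\mathcal{A}_0)^{-1}\mathcal{B}\| \leq 1/2$, so the Neumann series
\[
(\lambda-\mathcal{A}_B)^{-1} = \bigl(I + (\lambda-\mathcal{A}_0)^{-1}\mathcal{B}\bigr)^{-1}(\lambda-\mathcal{A}_0)^{-1} = \sum_{k\ge 0}(-1)^k \bigl((\lambda-\mathcal{A}_0)^{-1}\mathcal{B}\bigr)^k (\lambda-\mathcal{A}_0)^{-1}
\]
converges in $\mathcal{L}(\mathcal{H})$ uniformly for $\lambda$ on these curves. This proves (i), and additionally yields $\|(\lambda-\mathcal{A}_B)^{-1}\|_{\mathcal{L}(\mathcal{H})} \leq 2C_1/\delta_{n-1}$ on the same curves.

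For (ii), I would apply the second resolvent identity
\[
(\lambda-\mathcal{A}_B)^{-1} - (\lambda-\mathcal{A}_0)^{-1} = -(\lambda-\mathcal{A}_B)^{-1}\,\mathcal{B}\,(\lambda-\mathcal{A}_0)^{-1}
\]
and multiply the two uniform resolvent bounds together with the boundedness of $\mathcal{B}$:
\[
\|(\lambda-\mathcal{A}_B)^{-1} - (\lambda-\mathcal{A}_0)^{-1}\|_{\mathcal{L}(\mathcal{H})} \leq \frac{2C_1}{\delta_{n-1}} \cdot \|\mathcal{B}\| \cdot \frac{C_1}{\delta_{n-1}} = \frac{C}{\delta_{n-1}^2},
\]
which is precisely \eqref{R0}. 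The only genuinely delicate step is the distance estimate in the first paragraph; everything afterwards is a standard perturbation argument. The routine but slightly finicky part is verifying side-by-side that the construction of $a_n, b_n, d_n$ gives exactly the $\delta_{n-1}/2$ margin on every segment of both $\Gamma^{(n)}$ and $\partial C^{(n)}$, and that no other eigenvalue $i\sqrt{\mu_k}$ (for $k \ne n, n\pm 1$) can encroach closer---which follows since $\sqrt{\mu_k}$ is monotone increasing in $k$.
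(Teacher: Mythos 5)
Your proposal is correct and follows essentially the same route as the paper: the $\delta_{n-1}/2$ distance estimate for the skew-adjoint resolvent, a Neumann-series/factorization argument to invert $\lambda-\mathcal{A}_B$ and obtain (i), and a bound on the resolvent difference giving $O(\delta_{n-1}^{-2})$ for (ii). The only cosmetic difference is that you invoke the second resolvent identity directly where the paper sums the tail of the Neumann series; the estimates are identical.
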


\begin{proof}
Since ${\mathcal A}_0$ is skew-adjoint, it follows that
 \begin{equation}\label{resolvent1}
\Vert (\lambda-{\mathcal A}_0)^{-1} \Vert_{{\mathcal L}(\mathcal{H})}\leq \frac{1}{ {\rm dist}(\lambda,\sigma({\mathcal A}_0))}.
\end{equation}
By construction of $\Gamma^{(\pm n)}$ and $C^{(n)}$, we have:
\begin{align}
{\rm dist}  (\Gamma^{(\pm n)},\sigma({\mathcal A}_0))&=
\min\big(|b_n-i\sqrt{\mu_n}|,|d_n-i\sqrt{\mu_n}|,|ia_{n+1}-i\sqrt{\mu_{n+1}}|,|ia_n-i\sqrt{\mu_{n-1}}|\big)\nonumber\\
&=\frac{\delta_{n-1}}{2},\nonumber
\end{align}
and ${\rm dist}  (\partial C^{(n)},\sigma({\mathcal A}_0))\geq  \frac{\delta_{n-1}}{2},$
which  together with (\ref{resolvent1}) yields
 \begin{equation}\label{R1}
\Vert (\lambda-{\mathcal A}_0)^{-1} \Vert_{{\mathcal L}({\mathcal{H}})}\leq \frac{2}{\delta_{n-1}},\, \text { uniformly on } \lambda\in \Gamma^{(\pm n)}\cup \partial C^{(n)}.
\end{equation}
Recalling that ${\mathcal B}={\mathcal A}_0-{\mathcal A}_B$ is a bounded linear operator on ${\mathcal{H}}$ defined by 
$$
{\mathcal B}=\left(\begin{array}{cc}
0 & 0\\
0 & BB^*
\end{array}\right).
$$  
From  \eqref{R1}, we have 
 \begin{equation}\label{R3}
 \Vert  {\mathcal B}(\lambda-{\mathcal A}_0)^{-1}\Vert_{{\mathcal L}({\mathcal H})}\leq
\frac{2\Vert B^*\Vert^2_{\mathcal{L}(H,U)}}{\delta_{n-1}}
\, \text { uniformly on } \lambda\in \Gamma^{(\pm n)}\cup  \partial C^{(n)}\,.
\end{equation}

By \ref{A1}, we choose $N_0$  such that for $n\geq N_0$:
$$
\frac{2\Vert B^*\Vert^2_{\mathcal{L}(H,U)}}{\delta_{n-1}}\leq \kappa<1\,.
$$

Now the first statement  of the lemma follows from \eqref{R1}, \eqref{R3} and the following obvious equality:
 \begin{equation}\label{R4}
\lambda-{\mathcal A}_B=\Big[\text{Id}+{\mathcal B}(\lambda-{\mathcal A}_0)^{-1}\Big](\lambda-{\mathcal A}_0).
\end{equation}
On the other hand \eqref{R4}  yields
$$
(\lambda-{\mathcal A}_B)^{-1}=(\lambda-{\mathcal A}_0)^{-1}+
(\lambda-{\mathcal A}_0)^{-1}\sum_{p\geq 1}\big[-{\mathcal B}(\lambda-{\mathcal A}_0)^{-1}\big]^p,
$$
which together with  \eqref{R1} and  \eqref{R3} imply   \eqref{R0}.
\end{proof}

According to Lemma \ref{resolventestimate}, for $n\geq N_0$  the following Riesz projections are well defined:
\begin{equation}\label{projector}
P_{\Gamma^{(\pm n)}}^{B}:=\frac{1}{2\pi i}\int_{\Gamma^{(\pm n)}}(\lambda-{\mathcal A}_B)^{-1}\, d\lambda, \quad
P_{\Gamma^{(\pm n)}}^{0}:=\frac{1}{2\pi i}\int_{\Gamma^{(\pm n)}}(\lambda-{\mathcal A}_0)^{-1}\, d\lambda,
\end{equation}
 $$
P_{\partial C^{(n)}}^{B}:=\frac{1}{2\pi i}\int_{\partial C^{(n)}}(\lambda-{\mathcal A}_B)^{-1}\, d\lambda\quad \text{and}\quad
P_{\partial C^{(n)}}^{0}:=\frac{1}{2\pi i}\int_{\partial C^{(n)}}(\lambda-{\mathcal A}_0)^{-1}\, d\lambda.
$$
The following result is a simple consequence  of  \ref{A1}, \eqref{A3}, \eqref{R0} and the estimate of the measure on $\partial C^{(n)}$,
$\Gamma^{(\pm n)}$.
\medskip
\begin{lemma}\label{lemR}
We assume \ref{A1} and that \eqref{A3} is satisfied.
Then, there exists $C>0$ (independent of $n$) and $N_0\in {\mathbb N}$ such that for $n\geq N_0$, we have
\begin{equation}\label{estimationprojectors1}
\Vert P_{\Gamma^{(\pm n)}}^{B}-P_{\Gamma^{(\pm n)}}^{0} \Vert_{{\mathcal L}(\mathcal{H})}\leq C\frac{\delta_{n}}{\delta_{n-1}^2}<1,
\end{equation}

\begin{equation}\label{estimationprojectors2}
\Vert P_{\partial C^{(n)}}^{B}-P_{\partial C^{(n)}}^{0} \Vert_{{\mathcal L}(\mathcal{H})}\leq C\frac{\delta_{n}}{\delta_{n-1}^2}<1. 
\end{equation}
\end{lemma}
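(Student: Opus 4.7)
The plan is to represent each projection difference as a contour integral of the resolvent difference, apply the pointwise bound \eqref{R0} from Lemma \ref{resolventestimate}, and pay off the integral against the arc length of the contour. Assumption \eqref{A3} will then make the resulting quotient tend to zero as $n\to\infty$, so the bound falls below $1$ for $n\geq N_0$ with $N_0$ large enough.

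\emph{Step 1: reduction to a contour-length estimate.} By the definitions \eqref{projector},
$$
P_{\Gamma^{(\pm n)}}^B - P_{\Gamma^{(\pm n)}}^0 \;=\; \frac{1}{2\pi i}\int_{\Gamma^{(\pm n)}} \Bigl[(\lambda-{\mathcal A}_B)^{-1}-(\lambda-{\mathcal A}_0)^{-1}\Bigr]\,d\lambda,
$$
and the same identity holds on $\partial C^{(n)}$. Taking operator norms and using \eqref{R0}, which is uniform on $\Gamma^{(\pm n)}\cup\partial C^{(n)}$, I get
$$
\bigl\Vert P_{\Gamma^{(\pm n)}}^B - P_{\Gamma^{(\pm n)}}^0 \bigr\Vert_{{\mathcal L}({\mathcal H})} \;\leq\; \frac{C}{2\pi\,\delta_{n-1}^{2}}\;|\Gamma^{(\pm n)}|,
$$
and analogously for $\partial C^{(n)}$.

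\emph{Step 2: arc length of $\Gamma^{(\pm n)}$.} From the explicit description of the four sides, the horizontal pieces $\gamma_1^{(n)},\gamma_3^{(n)}$ each have length $\delta_n$, while the vertical pieces $\gamma_2^{(n)},\gamma_4^{(n)}$ each have length $a_{n+1}-a_n=\tfrac12(\delta_{n-1}+\delta_n)$. Hence $|\Gamma^{(\pm n)}|=3\delta_n+\delta_{n-1}$. Plugging in gives
$$
\bigl\Vert P_{\Gamma^{(\pm n)}}^B - P_{\Gamma^{(\pm n)}}^0 \bigr\Vert_{{\mathcal L}({\mathcal H})} \;\leq\; \frac{C(3\delta_n+\delta_{n-1})}{2\pi\,\delta_{n-1}^{2}}.
$$
By \ref{A1} we have $\delta_{n-1}\to+\infty$, so $\delta_{n-1}/\delta_{n-1}^2=1/\delta_{n-1}=o(\delta_n/\delta_{n-1}^2)$; combining both terms yields the stated bound $C\,\delta_n/\delta_{n-1}^2$, and by \eqref{A3} this is $<1$ for $n\geq N_0$ with $N_0$ sufficiently large.

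\emph{Step 3: the rectangle $\partial C^{(n)}$.} The same contour-integral argument applies verbatim; the only new input is a length estimate. This is the one place where extra care is needed: the perimeter of $C^{(n)}$ contains vertical sides of length $\sim 2\sqrt{\mu_{n-1}}$, which is typically much larger than $\delta_n$. I would therefore not be content with the naive bound $|\partial C^{(n)}|\cdot C/\delta_{n-1}^2$, but split $\partial C^{(n)}$ into (a) two short horizontal segments of length $\delta_{n-1}$, which behave exactly like the $\gamma_i^{(n)}$ sides, and (b) the two long vertical segments, on which one can reprove a sharper resolvent bound by writing $(\lambda-{\mathcal A}_0)^{-1}$ in the eigenbasis $(V_{\pm k})$ of Lemma \ref{undampedspectra} and using that $|\lambda-i\sqrt{\mu_k}|^{-1}$ decays in $k$ away from the closest eigenvalue. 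Summing these telescopes and recovers the required form $C\,\delta_n/\delta_{n-1}^2$.

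The main obstacle, as indicated, is the $\partial C^{(n)}$ estimate: the pointwise bound \eqref{R0} by itself is not sharp on the long vertical sides of $C^{(n)}$, and a more spectral-resolution-flavoured argument is needed there. The $\Gamma^{(\pm n)}$ case, by contrast, is essentially just bookkeeping of lengths together with \eqref{R0} and \eqref{A3}.
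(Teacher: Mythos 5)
Your strategy---write the difference of projections as the contour integral of the resolvent difference, apply the uniform bound \eqref{R0}, and multiply by the length of the contour---is exactly the argument the paper intends (its own ``proof'' is the one sentence preceding the lemma), and your treatment of $\Gamma^{(\pm n)}$ is essentially right: the perimeter is indeed $3\delta_n+\delta_{n-1}$. One step there is false as stated, though: ``$1/\delta_{n-1}=o(\delta_n/\delta_{n-1}^2)$'' is equivalent to $\delta_{n-1}/\delta_n\to 0$, which does \emph{not} follow from \ref{A1} and \eqref{A3} (take $\delta_{2k}=k^2$, $\delta_{2k+1}=k^{3/2}$: then \ref{A1}, \ref{A2} and \eqref{A3} all hold but $\delta_{n-1}/\delta_n\to\infty$ along odd $n$). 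What your computation honestly gives is $C(\delta_n+\delta_{n-1})/\delta_{n-1}^2=C\delta_n/\delta_{n-1}^2+C/\delta_{n-1}$, which still tends to $0$ by \ref{A1} and \eqref{A3}, so the conclusion ``$<1$ for $n\geq N_0$'' survives; only the precise form $C\delta_n/\delta_{n-1}^2$ claimed in \eqref{estimationprojectors1} is not recovered. This looseness is arguably in the lemma's statement as much as in your proof, but you should state the bound you actually obtain rather than absorb the extra term with an unjustified $o(\cdot)$.

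The genuine gap is \eqref{estimationprojectors2}, and you have diagnosed it correctly but not repaired it. The perimeter of $C^{(n)}$ is $2\delta_{n-1}+4a_n\sim 4\sqrt{\mu_{n-1}}$, so length times \eqref{R0} gives $\sim C\sqrt{\mu_{n-1}}/\delta_{n-1}^2$, which is not $O(\delta_n/\delta_{n-1}^2)$ and need not even be small (in the beam example $\sqrt{\mu_n}=n^2\pi^2$, $\delta_n\sim 2n\pi^2$, this quotient is bounded below by a positive constant). Your proposed fix---a sharper resolvent bound on the long vertical sides via the eigenbasis of Lemma \ref{undampedspectra}---is only sketched, and as sketched it does not close the gap: on the side $\mathrm{Re}(\lambda)=\pm\delta_{n-1}/2$ one has $\Vert(\lambda-\mathcal A_0)^{-1}\Vert^2\leq\sum_k\bigl((\delta_{n-1}/2)^2+(\mathrm{Im}\,\lambda\mp\sqrt{\mu_k})^2\bigr)^{-1}$, and each of the $2(n-1)$ eigenvalues below height $a_n$ contributes about $2\pi/\delta_{n-1}$ to the integral, so the total is of order $n/\delta_{n-1}$. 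This is neither $O(\delta_n/\delta_{n-1}^2)$ nor $o(1)$ under the stated hypotheses (for $\delta_n\sim n$, i.e.\ the beam, it tends to a positive constant), so ``summing these telescopes and recovers the required form'' is not justified. To be fair, the paper supplies no argument for \eqref{estimationprojectors2} either; but your proposal as written establishes only \eqref{estimationprojectors1} (with the modified constant), and you should either prove \eqref{estimationprojectors2} for the single index $n=N_0$ that is actually used in the proof of Theorem \ref{countinglemma}---which still requires an argument that the norm is $<1$---or identify the additional hypothesis (e.g.\ $\sqrt{\mu_{n-1}}=O(\delta_{n-1}^2)$, or $n=O(\delta_{n-1})$ after the refinement) under which your estimate goes through.
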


\noindent
{\it End of the proof of Theorem \ref{countinglemma}}: 
First, recalling that if $P$ and $Q$ are two  projectors  with $\Vert P-Q\Vert<1$, 
then  ${\rm rank}(P)={\rm rank}(Q)$ (see Lemma 3.1 in \cite{GoKr69_01}). 
Thus, in the notation of Lemma 3.3, we have
$$   
{\rm rank}(P_{\partial C^{(n)}}^{B})={\rm rank}(P_{\partial C^{(n)}}^{0}) , \quad 
{\rm rank}(P_{\Gamma^{(\pm n)}}^{B})={\rm rank}(P_{\Gamma^{(\pm n)}}^{0}),\quad {\rm for}\,\, n\geq N_0.
$$
Next, we conclude from  \eqref{compzonespectre} and \eqref{realzonespectre}  
that ${\mathcal C}\cup {\mathcal I}\subset C^{(N_0)}\bigcup\left(\underset{{|k|\geq N_0} }{\cup}{\rm Int}(\Gamma^{(k)})\right)$, 
hence that $\sigma({\mathcal A}_B)$ is a subset of  $C^{(N_0)}\bigcup\left(\underset{{|k|\geq N_0} }{\cup}{\rm Int}(\Gamma^{(k)})\right)$. 
Now Theorem \ref{countinglemma}  follows from the fact that
$$
 {\rm rank}(P_{\partial C^{(N_0)}}^{0})=2N_0\quad {\rm and}\quad {\rm rank}(P_{\Gamma^{(\pm n)}}^{0})=1.
$$
\hfill{$\square$}

\begin{remark} 
In the proofs of Lemmas \ref{resolventestimate}-\ref{lemR},  we have used only the fact that  the distance between 
two consecutive eigenvalues of ${\mathcal A}_0$ tends to infinity at infinity and the fact that ${\mathcal A}_0$ is a skew-adjoint operator.  
Similar general result are well-known $($see  Theorem 4.15a  in \cite{Ka66_01}$)$.
\end{remark}

\subsection[Riesz basis]{Riesz basis}\label{rootvect}

We start this subsection by constructing the eigenvectors associated to the high frequencies of ${\mathcal A}_B$.
Since the high frequencies of ${\mathcal A}_B$ are simple
then for all $k\in \mathbb N^*$, $k>N_0$ ($N_0$ given by Theorem \ref{countinglemma}),
we define 
\begin{equation}\label{higheigenvec}
\varphi_{\pm k}=P_{\pm k}^BV_{\pm k},
\end{equation} 
where $V_{\pm k}$ is the eigenvector of ${\mathcal A}_0$ associated to the eigenvalue $\pm i\sqrt{\mu_k}$
given by (\ref{unspectra}), and $P_{\Gamma^{(\pm k)}}^{B}$ is given by (\ref{projector}).  Note that $P_n^0V_n=V_n$
for all $n\in \mathbb Z^*$.

For $n\in \mathbb Z^*$, we denote the eigenvalue of ${\mathcal{A}}_B$ by $\lambda_n(B)$. 
We have the following proposition:
\begin{proposition}\label{quadratic} 
We assume \ref{A1} and that \eqref{A3} is satisfied.
For $k\in \mathbb Z^*,$ such that $|k|>N_0$, the function $\varphi_k$ is an eigenvector of ${\mathcal{A}}_B$
associated to the eigenvalue $\lambda_k:=\lambda_k(\mathcal{A}_B)$. Moreover, there exists $C>0$ such that
\begin{equation}\label{higheigenvectorest}
\Vert \varphi_n-V_n\Vert_{{\mathcal{H}}}\leq  
 C\frac{\delta_{n}}{\delta_{n-1}^2},\quad \hbox{\rm for all} \ n \quad \hbox{\rm such that} \ |n|>N_0. 
\end{equation}
Here $N_0$ is given by Theorem \ref{countinglemma}.\\
In particular, $\Vert \varphi_n\Vert_{\mathcal{H}}=1+o(1)$ uniformly for $n\in \mathbb Z^*$, $|n|>N_0$.
\end{proposition}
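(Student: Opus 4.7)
The plan is to deduce everything from Theorem \ref{countinglemma} together with the projector estimate of Lemma \ref{lemR}. By Theorem \ref{countinglemma}, for $|k|>N_0$ there is exactly one eigenvalue $\lambda_k$ of $\mathcal{A}_B$ inside $\Gamma^{(k)}$, and it is algebraically simple. Consequently the range of the Riesz projection $P_{\Gamma^{(k)}}^B$ coincides with the $1$-dimensional generalized eigenspace, which, by algebraic simplicity, equals $\ker(\mathcal{A}_B-\lambda_k)$. Hence $\varphi_k=P_{\Gamma^{(k)}}^B V_k$ lies in this eigenspace, and is either zero or a genuine eigenvector of $\mathcal{A}_B$ associated with $\lambda_k$. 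The first assertion of the proposition therefore reduces to showing that $\varphi_k\neq 0$ for $|k|>N_0$.

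For the quantitative estimate, the key observation is that $P_{\Gamma^{(k)}}^0 V_k=V_k$: since $\Gamma^{(k)}$ encloses the unique eigenvalue $i\sqrt{\mu_k}$ (or $-i\sqrt{\mu_k}$ for negative index) of the skew-adjoint operator $\mathcal{A}_0$, the Riesz projection of $\mathcal{A}_0$ across $\Gamma^{(k)}$ acts as the identity on the corresponding eigenvector $V_k$, which is orthogonal to all other $V_j$. I would then write
\begin{equation*}
\varphi_k-V_k=\bigl(P_{\Gamma^{(k)}}^B-P_{\Gamma^{(k)}}^0\bigr)V_k,
\end{equation*}
and apply Lemma \ref{lemR} together with $\|V_k\|_{\mathcal{H}}=1$ to obtain
\begin{equation*}
\|\varphi_k-V_k\|_{\mathcal{H}}\leq \bigl\|P_{\Gamma^{(k)}}^B-P_{\Gamma^{(k)}}^0\bigr\|_{\mathcal{L}(\mathcal{H})}\leq C\,\frac{\delta_k}{\delta_{k-1}^2},
\end{equation*}
which is the claimed bound \eqref{higheigenvectorest}.

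It remains to close the loop by verifying that $\varphi_k$ is indeed nonzero. Because \eqref{A3} is assumed, $\delta_k/\delta_{k-1}^2\to 0$, so by enlarging $N_0$ if necessary we may assume $C\,\delta_k/\delta_{k-1}^2<1/2$ for $|k|>N_0$. The reverse triangle inequality then gives
\begin{equation*}
\|\varphi_k\|_{\mathcal{H}}\geq \|V_k\|_{\mathcal{H}}-\|\varphi_k-V_k\|_{\mathcal{H}}\geq 1-C\,\frac{\delta_k}{\delta_{k-1}^2}>0,
\end{equation*}
so $\varphi_k\neq 0$, confirming it is an eigenvector of $\mathcal{A}_B$. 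Combined with the matching upper bound, this also yields $\|\varphi_k\|_{\mathcal{H}}=1+o(1)$ uniformly in $|k|>N_0$. There is no real obstacle here; the whole proposition is essentially a bookkeeping consequence of Theorem \ref{countinglemma} and Lemma \ref{lemR}, the only delicate point being the identity $P_{\Gamma^{(k)}}^0 V_k=V_k$, which is what makes the projector estimate translate directly into the desired perturbative bound on eigenvectors.
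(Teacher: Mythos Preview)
Your proposal is correct and follows essentially the same route as the paper: write $\varphi_k-V_k=(P_{\Gamma^{(k)}}^B-P_{\Gamma^{(k)}}^0)V_k$ using $P_{\Gamma^{(k)}}^0 V_k=V_k$, then invoke Lemma~\ref{lemR}. If anything, you are slightly more explicit than the paper in checking that $\varphi_k\neq 0$ (the paper leaves this implicit in the norm estimate), and you use the triangle inequality where the paper somewhat oddly refers to the ``parallelogram inequality'' for the $1+o(1)$ conclusion.
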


\begin{proof}
For all $m\in \mathbb Z^*$, $|m|>N_0$, we have ${\mathcal{A}}_B\varphi_{m}={\mathcal{A}}_BP_{\Gamma^{( m)}}^{B}V_{m}=
\lambda_{m}(B)P_{\Gamma^{(m)}}^{B}V_{m}=\lambda_{m}(B)\varphi_{m}$.
Using Lemma \ref{lemR} and the fact that $P_{\Gamma^{(n)}}^{0}V_n=V_n$ with $\Vert V_n\Vert_{V\times L^2}=1$, we get:
$$
\Vert \varphi_m-V_m\Vert_{\mathcal{H}}=\Vert  (P_{\Gamma^{(m)}}^{B}-P_{\Gamma^{(m)}}^{0})V_m\Vert_{\mathcal{H}}
\leq \Vert P_{\Gamma^{(m)}}^{B}-P_{\Gamma^{(m)}}^{0}\Vert_{{\mathcal L}({\mathcal{H}})}\leq  C\frac{\delta_{m}}{\delta_{m-1}^2},
$$
for all $m\in \mathbb Z^*$, $|m|>N_0$, ($C$ independent of $m$). 
In particular, parallelogram inequality and recalling that $\Vert V_m\Vert_{\mathcal{H}}=1$
give that
 $\Vert \varphi_m\Vert_{\mathcal{H}}=1+o(1)$ uniformly for $m\in \mathbb Z^*$, $|m|>N_0$.
\end{proof}

Now, we complete the sequence $(\varphi_k)_{|k|>N_0}$ of the eigenvectors associated to the high frequencies of ${\mathcal A}_B$
by considering the generalized eigenvectors associated to the low frequencies of ${\mathcal A}_B$.
Note that the number of these  generalized eigenvectors associated to the low frequencies of ${\mathcal A}_B$ is finite, 
at most $2N_0$ by Theorem \ref{countinglemma}. 
For $k\in \mathbb Z^*$ such that $|k|\leq N_0$, 
we denote by $m_k$ the algebraic multiplicity of $\lambda_k:=\lambda_k(\mathcal{A}_B)$ and we
associated to it the Jordan chain of generalized eigenvectors,
$\big(W_{k,p}\big)_{p=0}^{m_k-1}$, i.e., a Jordan basis of the root 
subspace ${\mathcal E}_k:=\Big\{W\in \mathcal{H};\
\big({\mathcal A}_B-\lambda_k\big)^{m_k}W=0\Big\}$,
\begin{eqnarray}\label{eigfunc1}
& {\mathcal A}_BW_{k,l}=\lambda_k W_{k,l},\quad \Big\langle
W_{k,l},W_{k,{l'}}\Big\rangle=0,\quad l'<l=0,\cdots,p_k.       
\\
& {\mathcal A}_BW_{k,m}=\lambda_k W_{k,m}+W_{k,m-1},\quad \Big\langle
W_{k,m},W_{k,{m'}}\Big\rangle=0,\, 
\\
& 0\leq m'<m=p_k+1,\cdots,m_k-1.\nonumber
\end{eqnarray}
Here $p_k$ is the dimension of the eigenspace ${E}_k:=\Big\{W\in \mathcal{H};\
\big({\mathcal A}_B-\lambda_k\big)W=0\Big\}$, ${E}_k\subset {\mathcal E}_k$.

Now, we take the family of generalized  eigenvectors of ${\mathcal A}_B$:
$$
{\mathbb B}:=\big(W_{k,p}\big)_{|k|\leq N_0, 0\leq p\leq m_k-1}\cup
\big(\varphi_n\big)_{|n|>N_0}. 
$$

Since ${\overline{{\rm Vect}({\mathbb B})}}=\mathcal{H}$ 
(see Proposition \ref{propspectrales}, (iii)) and by assumption \ref{A3} the family ${\mathbb B}$ is quadratically close to the orthonormal
basis $\big(V_k\big)_{k\in \mathbb Z^*}$ of eigenvectors of the operator $\mathcal{A}_0$
(see \eqref{higheigenvectorest}). Then it follows from the Fredholm Alternative, see e.g., \cite[Appendix D, Theorem 3]{PoTr86_01}, 
the following result:
\begin{theorem}\label{Rieszbasis}
Assume \ref{A1} and \ref{A2}.
Then the set $\displaystyle {\mathbb B}$ is a Riesz basis for the energy
space $\mathcal{H}$. Moreover, there exists a linear isomorphism
$\Phi$ of $\mathcal{H}$ such that for all $n\in \mathbb Z^*$, $|n|>N_0$, $\Phi V_n=\varphi_n$ 
and $\Phi\Big({\rm Vect}(V_n,\, |n|\leq N_0)\Big)={\rm Vect}(W_{k,p},\, |k|\leq N_0, 0\leq p\leq m_k-1)$.
\end{theorem}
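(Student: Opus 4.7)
The plan is to deduce the theorem from the Bari / Paley--Wiener criterion quoted as \cite[Appendix D, Theorem 3]{PoTr86_01}: if $(V_n)_{n\in\mathbb{Z}^*}$ is an orthonormal basis of $\mathcal{H}$ and $(f_n)_{n\in\mathbb{Z}^*}$ is a complete family that is quadratically close to $(V_n)$, i.e.\ $\sum_n \Vert f_n - V_n\Vert_{\mathcal{H}}^2 <\infty$, then $(f_n)$ is a Riesz basis of $\mathcal{H}$ and there is a bounded linear isomorphism $\Phi$ of $\mathcal{H}$ sending $V_n$ to $f_n$. So the task reduces to enumerating $\mathbb{B}$ correctly, verifying quadratic closeness, and invoking the completeness already established.

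First I would fix a bijective enumeration of $\mathbb{B}$ indexed by $\mathbb{Z}^*$ that matches $(V_n)$. By Theorem \ref{countinglemma} the total algebraic multiplicity of the low-frequency eigenvalues $\{\lambda_k\,;\ |k|\le N_0\}$ is exactly $2N_0$, which is the cardinality of $\{V_n\,;\ |n|\le N_0\}$. I would therefore label the low-frequency generalized eigenvectors $(W_{k,p})_{|k|\le N_0,\,0\le p\le m_k-1}$ arbitrarily as $(f_n)_{|n|\le N_0}$, and set $f_n := \varphi_n$ for $|n|>N_0$. This gives a single family $(f_n)_{n\in\mathbb{Z}^*}$ encoding $\mathbb{B}$.

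Next I would check quadratic closeness. For the high-frequency indices Proposition \ref{quadratic} gives $\Vert \varphi_n - V_n\Vert_{\mathcal{H}} \le C\,\delta_n/\delta_{n-1}^2$ for $|n|>N_0$, and assumption \ref{A2} (together with the symmetry of the spectrum of $\mathcal{A}_B$ about the real axis, which extends the bound to negative indices) yields
\[
\sum_{|n|>N_0} \Vert \varphi_n - V_n\Vert_{\mathcal{H}}^2 \;\le\; C^2 \sum_{|n|>N_0} \left(\frac{\delta_n}{\delta_{n-1}^2}\right)^{\!2} \;<\;\infty .
\]
The remaining $2N_0$ terms with $|n|\le N_0$ form a finite sum, so $\sum_{n\in\mathbb{Z}^*}\Vert f_n - V_n\Vert_{\mathcal{H}}^2 < \infty$. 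Completeness of $\mathbb{B}$ in $\mathcal{H}$ is exactly Proposition \ref{propspectrales}(iii).

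With the two hypotheses in place, \cite[Appendix D, Theorem 3]{PoTr86_01} produces the Riesz-basis conclusion together with the isomorphism $\Phi$ of $\mathcal{H}$ characterized by $\Phi V_n = f_n$ for all $n\in\mathbb{Z}^*$. In particular $\Phi V_n = \varphi_n$ for $|n|>N_0$, and $\Phi$ maps $\mathrm{Vect}(V_n\,;\ |n|\le N_0)$ isomorphically onto $\mathrm{Vect}(W_{k,p}\,;\ |k|\le N_0,\,0\le p\le m_k-1)$, as claimed. The main obstacle is really only conceptual: one must see that the finite block of low-frequency generalized eigenvectors does not spoil the quadratic-closeness count and that the dimension bookkeeping of Theorem \ref{countinglemma} lets us absorb them into a finite perturbation; the hard analytic input --- the Hilbert--Schmidt-type decay of $\varphi_n - V_n$ --- is already delivered by Lemma \ref{lemR} and Proposition \ref{quadratic}.
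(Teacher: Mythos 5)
Your proposal is correct and follows essentially the same route as the paper: completeness of $\mathbb{B}$ from Proposition \ref{propspectrales}(iii), quadratic closeness from the estimate \eqref{higheigenvectorest} together with \ref{A2}, and then the Bari-type theorem of \cite[Appendix D, Theorem 3]{PoTr86_01}. Your explicit bookkeeping of the $2N_0$ low-frequency generalized eigenvectors against $(V_n)_{|n|\le N_0}$ is a welcome clarification of a step the paper leaves implicit, and you correctly invoke \ref{A2} (rather than only \eqref{A3}) for the summability.
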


\subsection{End of the proof of Main result}\label{PR}

Using Theorem \ref{Rieszbasis}, we may expand the initial data as
$$
\big[u^0,v^0\big]=\sum_{|k|\leq N_0}\sum_{p=0}^{m_k-1}c_{k,p}
 W_{k,p}+\sum_{|n|> N_0}c_{n}
\varphi_{n}\,.
$$
Then the solution of \eqref{Eqd3} is given by
\begin{equation}
\big[u,\partial_t u\big]= \sum_{|k|\leq N_0}\exp(\lambda_k t)
\sum_{p=0}^{m_k-1} c_{k,p} \sum_{l=0}^p
\frac{t^{p-l}}{(p-l)!} W_{k,l}+\sum_{|n|> N_0}c_{n}\exp(\lambda_n t)\varphi_{n}\,.
\end{equation}

Recalling from Theorem \ref{countinglemma} that at most $2N_0$ eigenvalues may
be of algebraic multiplicity greater than one and that $2N_0$ is the
maximum of such multiplicity, and the family $\big(V_{\pm k}\big)_{k\in \mathbb N^*}$ 
is an orthonormal basis of the energy space $\mathcal{H}$ (see Lemma \ref{undampedspectra}),
 then, by the linear isomorphism $\Phi$, we get  
$$
E\big(u(t)\big)=\Big\Vert\big[u,\partial_t u\big]\Big\Vert_{\mathcal{H}}^2 \leq \Vert \Phi\Vert^2\Vert \Phi^{-1}\Vert^2
\big(1+t^{2N_0}\big)\exp\big(2\mu(B)t\big)E\big(u(0)\big).
$$
Then $\omega(B)\leq \mu(\mathcal{A}_B),$ this with inequality \eqref{IN1} we
have established our main result.\hfill$\square$

\begin{remark} Note that, we talk about under (resp. over) damping if 
$\frac{1}{2}\Vert B^*\Vert^2_{{\mathcal L}(H,U)}$ is less (resp. greater) than $\sqrt{\mu_1}$, 
see \cite{CoZu94_01}. Recall that $\mu_1>0,$ is the first eigenvalue of $A$.
\end{remark}

\section{Some applications}
Firstly, we give examples of dissipative systems which satisfy Assumption \ref{A3} and we deduce the main result for these samples.
In the second part, we extend our result to some non-dissipative systems and we give an example that illustrates this situation.
  
\subsection{Damped Euler-Bernoulli beam equation} \label{example1}

We consider the following system:
\begin{equation}\label{eq1}
\partial^2_t u (x,t) + \partial^4_x u(x,t) +
2a(x)\partial_t u(x,t)= 0,\quad
0 < x < 1, \ t > 0,
\end{equation}
\begin{equation}\label{eq2}
u(0,t) = u(1,t) = 0, \quad \partial^2_x u(0,t) = \partial^2_x u(1,t) = 0, \quad t > 0,
\end{equation}
\begin{equation}\label{eq3}
u(x,0) = u^0(x), \quad \partial_t u(x,0) = u^1(x), \quad
0 < x < 1,
\end{equation}
where $a \in L^\infty(0,1)$ is non-negative satisfying the following condition: 
\begin{equation}\label{condexp}
\exists \, c>0 \hbox { s.t., } a(x) \geq c,\,\,  \; \hbox{a.e.,\, in  a non-empty open subset}  \; I \, \hbox{of}
\;  (0,1).
\end{equation}

We define the energy of a solution $u$ of 
\eqref{eq1}-\eqref{eq3}, at time  $t$,  as
\begin{equation}\label{DefEnergy}
E\big(u(t)\big)=\frac{1}{2}\int_{0}^1 \left( \big|\partial_t u (x,t) \big|^2 +
\big|\partial_x^2 u(x,t)\big|^2\right)\, dx\,.
\end{equation}

$$
U = L^2(0,1), \, H= L^2(0,1), \, H_{\frac{1}{2}} = H^2(0,1) \cap H^1_0(0,1), $$
$$
{\mathcal D}(A) = \left\{u \in H^4(0,1) \cap H^1_0(0,1); \frac{d^2u}{dx^2} (0) = \frac{d^2u}{dx^2} (1) = 0 \right\}, 
$$
$$
{\mathcal H} = [H^2(0,1) \cap H^1_0(0,1)] \times L^2(0,1), 
$$
$$
A = \frac{d^4}{dx^4}, \quad B \phi = B^* \phi = \sqrt{2a(x)}\phi, \quad \forall \phi \in L^2(0,1).
$$
So,
$$
{\mathcal A}_0 = \left(
\begin{array}{cc}
0 & I \\
- \frac{d^4}{dx^4} & 0
\end{array}
\right), \; {\mathcal A}_{B} = \left(
\begin{array}{cc}
0 & I \\
- \frac{d^4}{dx^4} & - 2a(x)
\end{array}
\right).
$$

\begin{itemize}
 
\item The operator ${\mathcal A}_0$ is skew-adjoint and with compact inverse and
the spectrum is given by $\sigma({{\mathcal A}_0}) = \left\{\pm i k^2 \pi^2, k \in \mathbb{N}^* \right\},$
then Assumptions \ref{A1} and \ref{A2} are satisfied. 

\item Note that the inequality (\ref{Esti}) is satisfied according to
\cite{Ha89_01}, if $a$ satisfies \eqref{condexp}.
So, $\omega(B) < 0$. 

\item As a direct implication of Theorem \ref{Princ}, we have the following result 
(this result was proved in \cite{AmDiZe13_01}):
\end{itemize}

\begin{proposition} The fastest decay rate is given by the spectral abscissa, i.e.,
$$
\omega(B) = \mu({\mathcal A}_{B}).
$$
\end{proposition}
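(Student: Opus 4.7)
The proof is essentially an application of Theorem \ref{Princ}, so the plan is to verify that the Euler-Bernoulli beam equation \eqref{eq1}--\eqref{eq3} fits the abstract framework and that the spectral hypotheses \ref{A1} and \ref{A2} hold.

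First, I would translate the PDE into the first-order form \eqref{Eqd3}. With $H=U=L^2(0,1)$, the positive self-adjoint operator $A=d^4/dx^4$ on $\mathcal{D}(A)=\{u\in H^4(0,1)\cap H^1_0(0,1);\ u''(0)=u''(1)=0\}$, and the bounded multiplication operator $B\phi=\sqrt{2a(x)}\phi$, the system \eqref{eq1}--\eqref{eq3} takes exactly the form \eqref{Eqd1} on ${\mathcal H}=[H^2(0,1)\cap H^1_0(0,1)]\times L^2(0,1)$, as spelled out in the setup preceding the proposition. Since $A$ is self-adjoint, positive, with compact inverse, the abstract hypotheses on $(A,B)$ required to apply Theorem \ref{Princ} are all in place.

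Next, I would compute the spectrum of $A$ explicitly. A direct calculation gives $\mu_k=k^4\pi^4$ with normalized eigenvectors $v_k(x)=\sqrt{2}\sin(k\pi x)$, so $\sqrt{\mu_k}=k^2\pi^2$ and
$$\delta_k=\sqrt{\mu_{k+1}}-\sqrt{\mu_k}=(2k+1)\pi^2.$$
In particular $\delta_k\to +\infty$, which is exactly Assumption \ref{A1}. Moreover
$$\frac{\delta_{k+1}}{\delta_k^2}=\frac{(2k+3)\pi^2}{(2k+1)^2\pi^4}=O(k^{-1}),$$
so $\big(\delta_{k+1}/\delta_k^2\big)_{k\geq 1}\in \ell^2(\mathbb{N}^*)$, which is Assumption \ref{A2}.

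With the abstract framework verified and the two spectral-gap hypotheses established, Theorem \ref{Princ} applies directly to yield $\omega(B)=\mu(\mathcal{A}_B)$, which is precisely the claim of the proposition. The routine observability inequality \eqref{Esti} under the localized lower bound \eqref{condexp} (cf.~\cite{Ha89_01}) is not required for the equality itself but guarantees that both quantities are strictly negative, so the proposition actually provides the sharp exponential decay rate. There is no substantive obstacle here: the entire content is to check that the concrete fourth-order operator has the right asymptotic eigenvalue spacing, and the explicit formula $\mu_k=k^4\pi^4$ makes this immediate.
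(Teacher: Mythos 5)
Your proof is correct and follows essentially the same route as the paper: verify that the hinged Euler--Bernoulli operator fits the abstract framework, compute $\sqrt{\mu_k}=k^2\pi^2$ so that $\delta_k=(2k+1)\pi^2\to\infty$ and $\delta_{k+1}/\delta_k^2=O(k^{-1})\in\ell^2(\mathbb{N}^*)$, and invoke Theorem \ref{Princ}. Your added remark that the observability inequality \eqref{Esti} is not needed for the equality itself but only to ensure $\omega(B)<0$ is accurate and consistent with the paper's presentation.
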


\subsection{Extension to non-dissipative systems}

\vskip1pt
We consider the system described by:
\begin{equation}\label{Eqd1s} 
\ddot{x}(t) + A x(t) + Kx(t) = 0,\quad
\big(x(0),\dot{x}(0)\big)=(x_0,x_1)\in H_{\frac{1}{2}}\times H,
\end{equation}
where $A$ is the same operator as above and $K \in {\mathcal L}(H_{\frac{1}{2}},H).$

We can rewrite the system (\ref{Eqd1s}) as a first order differential equation, by putting 
$Y(t)={}^T\big(x(t),\dot{x}(t)\big)$:
\begin{equation}
\label{Eqd3s} 
\dot{Y}(t) + {\mathcal A}_{K} Y(t)=0, \quad  Y(0)={}^T(x_0,x_1)\in {\mathcal H},
\end{equation}
where ${\mathcal A}_{K}:={\mathcal A}_0-{\mathcal K} : {\mathcal D}({\mathcal A}_{{\mathcal K}}) 
= {\mathcal D}({\mathcal A}_0)\subset {\mathcal H} \rightarrow {\mathcal H},$ with
$$\displaystyle
{\mathcal A}_0= \left(
\begin{array}{cc}
0  & I \\
- A & 0
\end{array}
\right) : {\mathcal D}({\mathcal A}_0) = {\mathcal D}(A) \times H_{\frac{1}{2}} \subset {\mathcal H}  \rightarrow {\mathcal H},$$ and
$\mathcal{K} = \left( \begin{array}{cc} 0  & 0 \\ - K & 0
 \end{array} \right) \in {\mathcal L}({\mathcal H}).$
  
The system (\ref{Eqd1s}) is well-posed. More precisely, the following classical result holds.
\begin{proposition}\label{exists}
Suppose that $(x_0,x_1) \in {\mathcal H}$. Then the
problem (\ref{Eqd1}) admits a unique solution $x$ in the following space $C\big([0,+\infty);H_{\frac{1}{2}} \big)\cap C^1\big([0,+\infty);H\big).$
\end{proposition}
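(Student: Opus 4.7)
The plan is to recognize that the Cauchy problem \eqref{Eqd3s} falls within the scope of the bounded-perturbation theorem for generators of strongly continuous semigroups, so the well-posedness follows at once after two short verifications: that $\mathcal{A}_0$ already generates a group on $\mathcal{H}$, and that $\mathcal{K}$ is a bounded perturbation of it.

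First, I would recall from the discussion after \eqref{Eqd3} that $\mathcal{A}_0$ is skew-adjoint on $\mathcal{H}$, so by Stone's theorem it generates a strongly continuous unitary group $({\bf S}_0(t))_{t\in\mathbb{R}}$. Next, I would check that $\mathcal{K}\in\mathcal{L}(\mathcal{H})$ (this is actually stated right after the definition of $\mathcal{K}$, but let me justify it). For $[f,g]\in\mathcal{H}=H_{\frac12}\times H$ one has $\mathcal{K}[f,g]=[0,-Kf]$, so using $K\in\mathcal{L}(H_{\frac12},H)$,
$$
\big\|\mathcal{K}[f,g]\big\|_{\mathcal{H}}^{2}
=\|Kf\|_{H}^{2}
\leq \|K\|_{\mathcal{L}(H_{\frac12},H)}^{2}\,\|f\|_{\frac12}^{2}
\leq \|K\|_{\mathcal{L}(H_{\frac12},H)}^{2}\,\big\|[f,g]\big\|_{\mathcal{H}}^{2}.
$$
Crucially, the hypothesis $K\in\mathcal{L}(H_{\frac12},H)$ is exactly what matches the target/source structure of $\mathcal{H}$; if $K$ had only been bounded on $H$ one would still be fine, but the weaker assumption here is the right one.

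With these two inputs, the standard bounded-perturbation theorem for $C_0$-semigroup generators (e.g.\ Pazy, Theorem 3.1.1) yields that $-\mathcal{A}_{K}=-\mathcal{A}_{0}+\mathcal{K}$ generates a strongly continuous semigroup $({\bf S}_{K}(t))_{t\geq 0}$ on $\mathcal{H}$; since $-\mathcal{A}_{0}$ generates the group $({\bf S}_{0}(-t))$ as well, one actually obtains a group, but only the positive-time statement is needed here. For any initial datum $Y_{0}={}^{T}(x_{0},x_{1})\in\mathcal{H}$ the unique mild solution of \eqref{Eqd3s} is $Y(t)={\bf S}_{K}(t)Y_{0}\in C([0,+\infty);\mathcal{H})$.

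Finally, I would unpack components: writing $Y(t)={}^{T}(x(t),\dot{x}(t))$ and using $\mathcal{H}=H_{\frac12}\times H$, the continuity $Y\in C([0,+\infty);\mathcal{H})$ delivers exactly $x\in C([0,+\infty);H_{\frac12})$ and $\dot{x}\in C([0,+\infty);H)$, i.e.\ $x\in C^{1}([0,+\infty);H)$. Uniqueness is immediate from the uniqueness clause in the semigroup generation theorem. There is no real obstacle: the only step that might look like one, namely the bounded-perturbation argument, is entirely classical once the boundedness of $\mathcal{K}$ on $\mathcal{H}$ is in hand, and no dissipativity or sign condition on $K$ is required for the generation of a $C_{0}$-group by $-\mathcal{A}_{K}$.
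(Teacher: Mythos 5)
Your argument is correct and is exactly the classical bounded-perturbation argument the paper implicitly invokes (it states Proposition \ref{exists} without proof as a ``classical result''): $\mathcal{A}_0$ is skew-adjoint, hence generates a unitary group by Stone's theorem, your computation $\Vert\mathcal{K}[f,g]\Vert_{\mathcal{H}}\leq \Vert K\Vert_{\mathcal{L}(H_{\frac12},H)}\Vert [f,g]\Vert_{\mathcal{H}}$ correctly uses the $H_{\frac12}$-norm on the first component, and the bounded-perturbation theorem then gives generation of a $C_0$-group by $-\mathcal{A}_K$ with the stated regularity and uniqueness. The only point you pass over lightly is the identification of the second component of the mild solution $Y(t)={\bf S}_K(t)Y_0$ with $\dot x(t)$ for data merely in $\mathcal{H}$ (rather than in $\mathcal{D}(\mathcal{A}_K)$), which is the standard density/equivalence argument between the first-order system and the second-order equation and does not affect the validity of the proof.
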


We denote, $$E\big(x(t)\big)=\frac{1}{2}\Big\Vert\big(x(t),\dot{x}(t)\big)\Big\Vert^2_{\mathcal H}.$$

Let $\mu(K)$ be the {\it spectral abscissa} of ${\mathcal A}_K$ given by:
\begin{equation}\label{abscissespecs}
\mu(K)= \sup \big\{{\rm Re}(\lambda);\ \lambda \in \sigma({\mathcal A}_K) \big\}.
\end{equation}
Here $\sigma({\mathcal A}_K)$
denotes the spectrum of ${\mathcal A}_K$. 

We define the growth bound, depending on $K$, as 
$$
\omega({K})=\inf\big\{\omega;\ \hbox{there exists}\,\, C=C(\omega)>0\, \hbox{such that}$$
\begin{equation}\label{DefRates}
E(x(t))\leq C(\omega) \, e^{2\omega t}E(x(0))
\hbox{ for every solution of (\ref{Eqd1s}) with initial data in}\ {\mathcal H}\big\}.
\end{equation}
As above we can prove the following result.
\begin{theorem} \label{princb}
Assume \ref{A1} and \ref{A2}. Then,
\begin{itemize}
\item[(i)] The eigenvectors of the
associated operator ${\mathcal A}_K$ form a Riesz basis in the energy space ${\mathcal H}$
\item[(ii)]
\begin{equation}\label{princss}
\omega(K)=\mu(\mathcal{A}_K).
\end{equation} 
\end{itemize}
\end{theorem}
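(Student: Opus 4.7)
The plan is to carry out exactly the scheme of Theorem~\ref{Princ}, with the bounded dissipative perturbation $\mathcal{B}$ replaced by $\mathcal{K}$. First I would check that $\mathcal{K}\in\mathcal{L}(\mathcal{H})$ with $\Vert\mathcal{K}\Vert_{\mathcal{L}(\mathcal{H})}\leq \Vert K\Vert_{\mathcal{L}(H_{\frac12},H)}$: for $(f,g)\in\mathcal{H}$,
$$
\Vert\mathcal{K}(f,g)\Vert_{\mathcal{H}}^2 = \Vert Kf\Vert_H^2 \leq \Vert K\Vert^2_{\mathcal{L}(H_{\frac12},H)}\, \Vert f\Vert_{\frac12}^2.
$$
Hence $\mathcal{A}_K=\mathcal{A}_0-\mathcal{K}$ is a bounded perturbation of the skew-adjoint operator $\mathcal{A}_0$ with compact resolvent, and \cite[Chapter~5, Theorem~10.1]{GoKr69_01} gives both the discreteness of $\sigma(\mathcal{A}_K)$ with finite algebraic multiplicities and the completeness of its root vectors in $\mathcal{H}$.

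Second, I would replace the spectrum-localization step of Proposition~\ref{propspectrales}(2)---which in the original proof was obtained by pairing the eigenvalue equation with $u$ and using the dissipativity of $BB^*$---by the following standard resolvent argument: since $\mathcal{A}_0$ is skew-adjoint,
$$
\Vert (\lambda-\mathcal{A}_0)^{-1}\Vert_{\mathcal{L}(\mathcal{H})} = \frac{1}{{\rm dist}(\lambda,\sigma(\mathcal{A}_0))},
$$
so a Neumann-series expansion of $(\lambda-\mathcal{A}_K)^{-1}$ yields $\lambda\in\rho(\mathcal{A}_K)$ whenever ${\rm dist}(\lambda,\sigma(\mathcal{A}_0))>\Vert\mathcal{K}\Vert_{\mathcal{L}(\mathcal{H})}$. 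Since $\sigma(\mathcal{A}_0)\subset i\mathbb{R}$, this confines $\sigma(\mathcal{A}_K)$ to the vertical strip $\vert {\rm Re}(\lambda)\vert\leq \Vert\mathcal{K}\Vert_{\mathcal{L}(\mathcal{H})}$, which plays exactly the role of $\mathcal{C}\cup\mathcal{I}$ in the dissipative case.

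With this in hand, the contours $\Gamma^{(\pm n)}$ and $C^{(n)}$ of Subsection~\ref{description} are reused verbatim, and the proofs of Lemmas~\ref{resolventestimate} and~\ref{lemR} transfer literally (they invoke only the skew-adjointness of $\mathcal{A}_0$ and the boundedness of the perturbation). The analog of Theorem~\ref{countinglemma} therefore holds: for $N_0$ large, $\mathcal{A}_K$ has exactly $2N_0$ eigenvalues (counted with multiplicity) inside $C^{(N_0)}$ and one simple eigenvalue inside each $\Gamma^{(\pm n)}$, $\vert n\vert>N_0$. Setting $\varphi_{\pm k}:=P^{K}_{\Gamma^{(\pm k)}} V_{\pm k}$ for $\vert k\vert>N_0$ and adjoining a Jordan basis $(W_{k,p})$ of the finitely many low-frequency root subspaces produces a complete family which, by $\Vert\varphi_k-V_k\Vert_{\mathcal{H}}\leq C\delta_k/\delta_{k-1}^2$ and~\ref{A2}, is quadratically close to the orthonormal basis $(V_k)_{k\in\mathbb{Z}^*}$. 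The Fredholm-alternative result \cite[Appendix~D, Theorem~3]{PoTr86_01} then gives (i) together with the isomorphism $\Phi$, exactly as in Theorem~\ref{Rieszbasis}.

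For (ii), expanding the initial data in this Riesz basis and bounding the Jordan-block exponentials $t^{p-l}e^{\lambda_k t}/(p-l)!$ by $(1+t^{2N_0})e^{2\mu(\mathcal{A}_K)t}$ reproduces the conclusion of Subsection~\ref{PR} word for word and gives $\omega(K)\leq \mu(\mathcal{A}_K)$; the reverse inequality $\mu(\mathcal{A}_K)\leq \omega(K)$ is the general spectral-bound inequality for $C^0$-semigroups and mirrors~\eqref{IN1}. I expect the only genuinely new step to be the strip localization of paragraph two; all remaining ingredients are direct transcriptions of Sections~\ref{description}--\ref{PR}, since none of those arguments relied on the special form or the sign of the perturbation.
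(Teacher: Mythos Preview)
Your proposal is correct and matches what the paper has in mind: the paper itself gives no proof beyond the phrase ``As above we can prove the following result,'' and your write-up is precisely the transcription of Sections~\ref{description}--\ref{PR} with $\mathcal{B}$ replaced by the bounded perturbation $\mathcal{K}$, together with the one honest modification you flag (the strip localization via Neumann series in place of the dissipativity-based Proposition~\ref{propspectrales}(2)). Nothing further is needed.
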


\noindent
{\bf Example~:} {\it Euler-Bernoulli equation with force term}

We consider the following initial and boundary value problem: 
\begin{equation}
 \label{eq1a} 
\partial^2_t u (x,t) + \, \partial^4_x  u (x,t) +  p \, \partial_x^2 u (x,t) = 0, \,  
0 < x < 1, \, t > 0, 
\end{equation}
\begin{equation}\label{eq2a} 
u (0,t) = \partial_x u (0,t) = 0, \, \partial^2_x u(1,t) = 0, \, \partial_x^3 u(1,t) = 0, \, t > 0,  
\end{equation}
\begin{equation}\label{eq3a} 
u (x,0) = u^0(x,0), \, \partial_t u(0,x) = u^1(x), \,  
0 < x < 1, 
\end{equation}
where $p$ is a positive constant.  

Here,
$$
H= L^2(0,1), \, H_{\frac{1}{2}} = \left\{u \in H^2(0,1); u(0) =0, \, \frac{du}{dx}(0) = 0 \right\},
$$ and 
the operators are  defined        
$$
\mathcal{A}_0 = \left(
\begin{array}{cc}
0 &  Id  \\
 -\frac{d^4}{dx^4}  - p \, \frac{d^2}{dx^2} & 0 
\end{array}
\right),
$$
$$
{\mathcal D}(\mathcal {A}_0) = \left\{ (u,v) \in \left(H^4(0,1)\cap H_{\frac12}\right)\times  H_{\frac12},\,\,  \frac{d^2u}{dx^2}(1) = 0, \, \frac{d^3u}{dx^3}(1) = 0\right\},
$$
and $K = p \, \frac{d^2}{dx^2} \in {\mathcal L}(H_{\frac{1}{2}},H)$.
 
We have the for all $(u^0,u^1)\in  H_{\frac{1}{2}} \times L^2(0,1)$ the  
problem \eqref{eq1a}-\eqref{eq3a} 
admits a unique solution  
$$u \in C([0,+\infty);H_{\frac{1}{2}})\cap C^1([0,+\infty);L^2(0,1)).$$

The spectrum of $\mathcal{A}_0$ is given by $(\pm i k^2\pi^2)_{k\in \mathbb{N}^*}$. Then
Assumptions \ref{A1} and \ref{A2} are satisfied. Therefore, according to Theorem \ref{princb}, we obtain

\begin{proposition}

\begin{itemize}
\item[{}] {}
\item[(i)] The generalized eigenvectors of the associated operator ${\mathcal A}_K$ form a Riesz basis in the energy space ${\mathcal H}.$
\item[(ii)] $\omega(K)=\mu(\mathcal{A}_K).$
\end{itemize}

\end{proposition}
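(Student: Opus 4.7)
The proposition is the direct specialization of Theorem \ref{princb} to the concrete PDE setting above, so my plan reduces to (a) checking that the abstract framework of Section 3.2 applies to this problem and (b) verifying the spectral gap hypotheses \ref{A1}--\ref{A2}. Once both are in place, statements (i) and (ii) follow immediately from Theorem \ref{princb}.

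First, I would identify the abstract data: the Hilbert space $H=L^2(0,1)$, the scaled space $H_{1/2}=\{u\in H^2(0,1):u(0)=u'(0)=0\}$, and the operator $A=d^4/dx^4$ on the domain $\mathcal{D}(A)$ specified in the paper. A standard integration by parts using the clamped boundary at $x=0$ and the free boundary at $x=1$ shows that $A$ is positive self-adjoint on $H$, with $\langle Au,u\rangle_H=\int_0^1|u''|^2\,dx$. The compact embedding $\mathcal{D}(A)\hookrightarrow H$ yields a compact inverse. Next, the perturbation $K=p\,d^2/dx^2:H_{1/2}\to H$ is manifestly bounded since $H_{1/2}\subset H^2(0,1)$, so $K\in\mathcal{L}(H_{1/2},H)$, placing us squarely in the framework of Theorem \ref{princb}.

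Second, I would verify \ref{A1} and \ref{A2} using the spectrum recorded in the paper, $\sigma(\mathcal{A}_0)=\{\pm ik^2\pi^2:k\in\mathbb{N}^*\}$, i.e.\ $\sqrt{\mu_k}=k^2\pi^2$. A direct computation gives
\[
\delta_k=\sqrt{\mu_{k+1}}-\sqrt{\mu_k}=(2k+1)\pi^2\longrightarrow+\infty,
\]
which establishes \ref{A1}, and
\[
\frac{\delta_{k+1}}{\delta_k^2}=\frac{2k+3}{(2k+1)^2\,\pi^2}=O\!\left(\frac{1}{k}\right),
\]
so the sequence $(\delta_{k+1}/\delta_k^2)_{k\geq 1}$ belongs to $l^2(\mathbb{N}^*)$, establishing \ref{A2}.

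With (A1) and (A2) verified, Theorem \ref{princb} applies verbatim: its conclusion (i) yields the Riesz basis property of the generalized eigenvectors of $\mathcal{A}_K$ in $\mathcal{H}$, and its conclusion (ii) yields $\omega(K)=\mu(\mathcal{A}_K)$. I do not expect any substantial obstacle; the only mildly delicate point is a careful check that the boundary conditions of $\mathcal{D}(A)$ make $A$ self-adjoint and positive, but this is routine and does not interact with the asymptotic spectral gap analysis, which is the only input needed by the abstract theorem.
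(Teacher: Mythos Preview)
Your proposal is correct and follows essentially the same route as the paper: the paper's argument is simply to note that the spectrum of $\mathcal{A}_0$ is $\{\pm ik^2\pi^2:k\in\mathbb{N}^*\}$, observe that \ref{A1} and \ref{A2} hold, and invoke Theorem~\ref{princb}. You have merely filled in the verification of \ref{A1}--\ref{A2} explicitly and checked that the abstract hypotheses on $A$ and $K$ are met, which the paper leaves implicit.
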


\end{document}